\def\@strippedMR{} \def\@scanforMR#1#2#3\endscan{%
  \ifx#1M\ifx#2R\def\@strippedMR{#3}%
  \else\def\@strippedMR{#1#2#3}%
  \fi\fi} \renewcommand\MR[1]{\relax \ifhmode\unskip\spacefactor3000
  \space\fi \@scanforMR#1\endscan
  MR\MRhref{\@strippedMR}{\@strippedMR}} \makeatother
\newcommand{\R}{\mathbb{R}} \newcommand{\Z}{\mathbb{Z}}
\newcommand{\T}{\mathbb{T}} \newcommand{\C}{\mathbb{C}}
\renewcommand{\S}{\mathbb{S}}
\theoremstyle{plain} \newtheorem{theorem}{Theorem}[section]
\newtheorem{lemma}[theorem]{Lemma}
\newtheorem{coro}[theorem]{Corollary}
\newtheorem{prop}[theorem]{Proposition}
\theoremstyle{definition} \newtheorem{definition}[theorem]{Definition}
\theoremstyle{remark} \newtheorem{remark}{Remark}
\DeclareMathOperator{\supp}{supp}
\newcommand{\eps}{\varepsilon} \newcommand{\lb}{\langle}
\newcommand{\rb}{\rangle}
\newcommand{\ls}{\lesssim}
\begin{document}

\title[GWP of the energy critical NLS with small data in $H^1(\T^3)$]%
{Global well-posedness of the energy critical Nonlinear Schr\"odinger
  equation with small initial data in $H^1(\T^3)$}
\author[S.~Herr]{Sebastian~Herr}\author[D.~Tataru]{Daniel~Tataru}
\author[N.~Tzvetkov]{Nikolay~Tzvetkov}

\subjclass[2000]{35Q55}

\address{Mathematisches Institut, Universit\"at Bonn, Endenicher Allee
  60, 53115 Bonn, Germany} \email{herr@math.uni-bonn.de}
\address{Department of Mathematics, University of California,
  Berkeley, CA 94720-3840, USA} \email{tataru@math.berkeley.edu}
\address{D\'epartement de Math\'ematiques, Universit\'e de
  Cergy-Pontoise, 2, avenue Adolphe Chauvin, 95302 Cergy-Pontoise
  Cedex, France \and Institut Universitaire de France} \email{nikolay.tzvetkov@u-cergy.fr}

\begin{abstract}
  A refined trilinear Strichartz estimate for solutions to the
  Schr\"odinger equation on the flat rational torus $\T^3$ is derived.
  By a suitable modification of critical function space theory this is
  applied to prove a small data global well-posedness result for the
  quintic Nonlinear Schr\"odinger Equation in $H^s(\T^3)$ for all
  $s\geq 1$. This is the first energy-critical global well-posedness
  result in the setting of compact manifolds.
\end{abstract}
\keywords{}
\maketitle
\section{Introduction and main result}\label{sect:intro_main}
\noindent
Our goal here is to establish a critical local well-posedness theory
for the nonlinear Schr\"odinger equation
\begin{equation}\label{eq:nls_0}
  (i\partial_t+\Delta) u=|u|^4u,
\end{equation}
posed on the three dimensional torus $\T^3=\R^3/(2\pi \Z)^3$ and thus
to naturally extend the result of Bourgain's fundamental paper
\cite{B93a} where only sub-critical regularity is considered.  To our
knowledge, this is the first critical well-posedness result in the
case of a nonlinear Schr\"odinger equation on a compact manifold.

Since the critical space associated to \eqref{eq:nls_0} is the Sobolev
space $H^1$, our result will directly imply small data global
well-posedness for \eqref{eq:nls_0} (see the conservation of energy
\eqref{eq:energy-cons} below).  The problem of arbitrary data global
well-posedness is a remaining challenging issue.

If the problem \eqref{eq:nls_0} is posed on the Euclidean space $\R^3$
then one obtains small data global well-posedness by invoking the
Strichartz estimate (see e.g. \cite{KT98})
\begin{equation}\label{strichartz}
  \|u\|_{L^\infty_{t}H^s_{x}}+\|u\|_{L^2_t;W^{s,6}_x}\lesssim
  \|u(0)\|_{H^s}+\|(i\partial_t +\Delta) u\|_{L^2_t;W^{s,6/5}_x}\,.
\end{equation}
Let us give the argument (see \cite{CW90}).  Applying
\eqref{strichartz} with $s=1$ in the context of \eqref{eq:nls_0}
together with the H\"older and the Sobolev embedding
$H^1\hookrightarrow L^6$ yield
\begin{equation*}
  \|u\|_{L^\infty_{t}H^1_{x}}+\|u\|_{L^2_t;W^{1,6}_x}
  \lesssim 
  \|u(0)\|_{H^1}+\|u\|_{L^2_t W^{1,6}_x}\|u\|_{L^\infty_t H^1_x}^4\,.
\end{equation*}
The above a priori estimate easily transforms to small data global
well-posedness result for \eqref{eq:nls_0} by invoking the Picard
iteration scheme (the use of the endpoint Strichartz estimate in this
reasoning is not really needed).  Recently, even the global
well-posedness and scattering in the Euclidean setting has been
established for large data by
Colliander--Keel--Staffilani--Takaoka--Tao \cite{CKSTT08}.

The arguments of the Euclidean setting completely fail if
\eqref{eq:nls_0} is posed on a compact Riemannian manifold. The
Strichartz inequality \eqref{strichartz} does not hold in the periodic
case as one can see it by adapting to the 3d situation the 1d
counterexample of \cite{B93a}.  It also strongly fails, if $\Delta$ is
the Laplace-Beltrami operator on the three dimensional sphere by
testing it for instance on zonal eigenfunctions with large eigenvalues
(see \cite{BGT04}).  The most dramatic failure is that of the
non-homogeneous estimate.

As shown by the work of Bourgain \cite{B93a}, Burq-Gerard-Tzvetkov
\cite{BGT04, BGT05b,BGT05a,BGT07} some weak versions of the inequality
\eqref{strichartz} survive in the setting of compact manifolds.  For
instance one has that for free solutions to the Schr\"odinger equation
on $\T^3$, we have the estimate
\begin{equation}\label{loss}
  \|u\|_{L^4_t L^4_x}\lesssim\|u(0)\|_{H^{\frac{1}{4}+}}\,.
\end{equation}
In the Euclidean setting the same estimate without any loss is
obtained from \eqref{strichartz} combined with the Sobolev embedding
$W^{\frac{1}{4},3}(\R^3)\hookrightarrow L^4(\R^3)$ (by interpolation
one may also put $L^4_tL^3_x$ type norms in \eqref{strichartz}).  A
similar estimate holds if the torus is replaced with the three
dimensional sphere; there the epsilon loss can not be avoided, see
\cite{BGT04}.

Using the estimate \eqref{loss} and its bilinear extensions one can
prove essentially optimal (up to the critical regularity)
well-posedness results for the nonlinear Schr\"odinger equation, on
both the $3d$ rational torus or the $3d$ sphere (see \cite{B93a},
\cite{BGT05a}). We also refer to \cite{B07} for partial results in the
case of irrational tori.

In the present paper we shall be able to prove the endpoint result,
i.e.  well-posedness in the energy space $H^1$.  For this purpose
estimates with an epsilon loss of type \eqref{loss} are useless. Our
strategy is to use trilinear Strichartz type estimates on the right
scales (with no loss with respect to scaling) only, involving $L^p_t$
with $p>2$, in the context of the $U^p$ and $V^p$ type critical spaces
theory.  These function spaces were originally developed in
unpublished work on wave maps by the second author, see also
Koch-Tataru \cite{KoTa05}.  Here we will use the formalism as
presented in Hadac-Herr-Koch \cite{HHK09}, but with the following
refinement. The scale-invariant linear Strichartz estimates which we
aim to trilinearize involve frequency scales which are finer than the
standard dyadic localizations. Therefore, a modification of the norms
is necessary which allows us to distinguish finer scales.

Let us now describe more precisely our results. Consider the Cauchy
problem
\begin{equation}\label{eq:nls}
  \begin{split}
    i\partial_t u +\Delta u&= |u|^4u\\
    u(0,x)&=\phi(x) \in H^s(\T^3).
  \end{split}
\end{equation}
For strong solutions $u$ of \eqref{eq:nls} we have energy conservation
\begin{equation}\label{eq:energy-cons}
  E(u(t))=\frac12 \int_{\T^3}|\nabla u(t,x)|^2dx+ \frac16
  \int_{\T^3}|u(t,x)|^6 dx=E(\phi),
\end{equation}
and $L^2$-conservation
\begin{equation}\label{eq:l2-cons}
  M(u(t))=\frac12 \int_{\T^3}|u(t,x)|^2dx=M(\phi).
\end{equation}
Thus, the natural energy space for this equation is $H^1(\T^3)$.  The
same problem \eqref{eq:nls} but considered in $\R^3$ is invariant with
respect to the scaling law
\[
u(t,x) \to u_\lambda(t,x)=\lambda^{1/2} u(\lambda^2 t, \lambda x)
\]
The energy is also invariant with respect to this scaling, which is
why this problem is called energy critical. Scaling considerations
remain valid in the periodic case for space-time scales which are $\ll
1$.

Define
\[
B_{\eps}(\phi_\ast):=\{\phi \in H^1(\T^3):
\|\phi-\phi_{\ast}\|_{H^1}<\eps\}.
\]
Let us now state our first result dealing with local well-posedness.
\begin{theorem}[Local well-posedness]\label{thm:main-local}
  Let $s\geq 1$. For every $\phi_{\ast}\in H^1(\T^3)$ there exists
  $\eps>0$ and $T=T(\phi_{\ast})>0$ such that for all initial data
  $\phi\in B_{\eps}(\phi_\ast)\cap H^s(\T^3)$ the Cauchy problem
  \eqref{eq:nls} has a unique solution \[u \in C([0,T);H^s(\T^3)) \cap
  X^s([0,T)).\] This solution obeys the conservation laws
  \eqref{eq:energy-cons} and \eqref{eq:l2-cons}, and the flow map
  \[
  B_{\eps}(\phi_\ast)\cap H^s(\T^3)\ni \phi\mapsto u \in
  C([0,T);H^s(\T^3)) \cap X^s([0,T))
  \]
  is Lipschitz continuous.
\end{theorem}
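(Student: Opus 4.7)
The plan is to recast (\ref{eq:nls}) via Duhamel's formula as the fixed-point equation
\[
u(t) = e^{it\Delta}\phi - i \int_0^t e^{i(t-t')\Delta}\bigl(|u|^4 u\bigr)(t')\,dt'
\]
in a suitable closed ball in $X^s([0,T))$. The critical function space $X^s$ should be built on the atomic $U^2_\Delta$/$V^2_\Delta$ framework of \cite{HHK09}, but modified so that the dyadic frequency blocks are further resolved into smaller cubes: this finer partition is precisely what allows a linear Strichartz estimate at a sub-dyadic scale to be transferred to a trilinear estimate at the usual Littlewood--Paley scale. With the dual space $N^s$ controlling Duhamel integrals in $X^s$, the entire construction reduces to a multilinear estimate of the form
\[
\Bigl\| \prod_{i=1}^5 u_i \Bigr\|_{N^s([0,T))} \ls \|u_1\|_{X^s}\prod_{i=2}^5 \|u_i\|_{X^1}
\]
(with suitable complex conjugations), after which a standard contraction argument closes.

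Because $H^1$ is scale-critical, such an estimate cannot be combined with a $T^\theta$ factor to produce smallness; the only source of smallness near a general $\phi_\ast$ must come from the profile itself. I would therefore split $\phi_\ast = P_{\leq N}\phi_\ast + P_{>N}\phi_\ast$ with $N=N(\phi_\ast)$ large enough that $\|P_{>N}\phi_\ast\|_{H^1}<\eta$ for a small universal $\eta$, and run the contraction for $\phi\in B_\eps(\phi_\ast)$ by treating $P_{\leq N}\phi$ as a smooth piece whose linear evolution is controlled on a short time $T(\phi_\ast)$ via Bernstein, and $P_{>N}\phi$ together with the $H^1$-perturbation $\phi-\phi_\ast$ as a uniformly small remainder. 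This is the standard mechanism by which critical small-data theory extends to arbitrary profiles with a profile-dependent time of existence; it succeeds as long as the quintic estimate above is genuinely multilinear enough to absorb one large factor against four small ones.

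The main obstacle is the multilinear estimate. I would Littlewood--Paley decompose each factor into frequencies $N_1\geq N_2\geq\dots\geq N_5$ and, inside each block, use the refined trilinear Strichartz estimate announced in the abstract to pair the three highest frequencies, absorbing the remaining two factors by Bernstein. The point is that the linear $L^4_tL^4_x$ bound \eqref{loss} on $\T^3$ has an $N^\eps$ loss, so one cannot simply apply H\"older in five factors; but a no-loss trilinear estimate on separated sub-dyadic cubes can be proved (this is the core new input), and it is to make this estimate meaningful as a bound in the function space that the cube-refined version of $X^s$ is needed. An orthogonality/almost-orthogonality argument over the sub-dyadic cubes, summation of the resulting geometric series in the ratios $N_j/N_1$, and finally the standard transference of the estimate from free evolutions to $U^p_\Delta$-atoms and then to $V^2_\Delta$ via the interpolation lemmas of \cite{HHK09} would yield the $X^s\to N^s$ bound.

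Once the contraction is set up, uniqueness in $X^s\cap C([0,T);H^s)$ and Lipschitz dependence of the flow map follow by applying the same multilinear estimate to the difference of two solutions. Persistence of regularity for $s>1$ is obtained by running the contraction in $X^s$ on the same time interval; the multilinear bound allows one $X^s$ factor to sit on the highest frequency without shrinking $T$. Finally, \eqref{eq:energy-cons} and \eqref{eq:l2-cons} hold classically for $H^2$ data by integration by parts against $u_t$ and $\bar u$ respectively, and then extend to $H^1$ by the Lipschitz continuity of the flow together with density of $H^2$ in $H^1$.
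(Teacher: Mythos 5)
Your overall architecture coincides with the paper's: a contraction in a $U^2/V^2$-based critical space refined below the dyadic scale (the paper's $X^s$ resolves all the way down to individual lattice points $\xi$, with the cube decomposition recovered as a corollary), a quintic tame estimate $\|\mathcal{I}(\prod u_k)\|_{X^s}\ls \|u_1\|_{X^s}\prod_{k\geq2}\|u_k\|_{X^1}$ proved by dualizing against $Y^{-s}$, and, for general $\phi_\ast$, the splitting $\phi=P_{\leq N}\phi+P_{>N}\phi$ with smallness coming from the tail in $X^1$ and from a short, $N$-dependent time for the at-least-quartic-in-$P_{\leq N}u$ part of the nonlinearity (controlled in $L^1_tH^1$ via Bernstein). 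The uniqueness, Lipschitz dependence, persistence of regularity and conservation-law steps are also as in the paper.

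However, the step by which you reduce the quintic estimate to the trilinear Strichartz estimate would fail as written. Pairing the three highest frequencies $N_1\geq N_2\geq N_3$ in a trilinear $L^2$ block and ``absorbing the remaining two factors by Bernstein'' costs $N_j^{3/2}$ per absorbed factor in $L^\infty_x$, against only one derivative available from the $X^1$ norm; the net loss $(N_4N_5)^{1/2}$ cannot be recovered from the $\max\{N_3/N_1,1/N_2\}^{\delta}$ gain of the trilinear estimate (which degenerates to $O(1)$ when all frequencies are comparable, the exactly scale-critical regime where no loss whatsoever is affordable). Using instead the $L^4$ bound \eqref{loss} plus Bernstein for the low factors reintroduces exactly the $\eps$-loss you correctly identify as fatal. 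The paper's resolution is different: after dualizing, one has a \emph{six}-linear integral including the test function $u_0\in Y^{-s}$, and one applies Cauchy--Schwarz to split it into the two \emph{interleaved} trilinear blocks $P_{N_1}u_1P_{N_3}u_3P_{N_5}u_5$ and $P_{N_0}u_0P_{N_2}u_2P_{N_4}u_4$, estimating each by the trilinear Strichartz bound of Proposition \ref{prop:l2_est_linear}. This produces exactly one power of each of $N_2,N_3,N_4,N_5$ (matching the four $X^1$ norms with nothing to spare) together with two independent $\delta$-gains that make the dyadic sums converge. Without this interleaving (or an equivalent device that avoids any Bernstein loss on the low factors), the multilinear estimate, and hence the contraction, does not close.
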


The spaces $X^s$ involved in this statement are the critical spaces
associated to our problem (see \eqref{eq:xtnorm} below). In the case
of small data, the proof of Theorem \ref{thm:main-local} shows that
the life span $T$ of the solution only depends on the $H^1$-norm of
the initial data. Thanks to the conservation laws, we therefore have a
global in time result:

\begin{theorem}[Small data global well-posedness]\label{thm:main-global}
  Let $s\geq 1$. There exists $\eps_0>0$ such that for all initial
  data $\phi\in B_{\eps_0}(0)\cap H^s(\T^3)$ and every $T>0$ the
  Cauchy problem \eqref{eq:nls} has a unique solution \[u \in
  C([0,T);H^s(\T^3))\cap X^s([0,T)).\] This solution obeys the
  conservation laws \eqref{eq:energy-cons} and \eqref{eq:l2-cons}, and
  the flow map
  \[
  B_{\eps_0}(0)\cap H^s(\T^3)\ni \phi\mapsto u \in
  C([0,T);H^s(\T^3))\cap X^s([0,T))
  \]
  is Lipschitz continuous.
\end{theorem}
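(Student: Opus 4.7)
My strategy is to bootstrap Theorem \ref{thm:main-local} using the conservation laws \eqref{eq:energy-cons}--\eqref{eq:l2-cons}. The key input I will need to extract from the proof of Theorem \ref{thm:main-local} is a small-data version with \emph{uniform} parameters: there exist universal $\eps_0>0$ and $T_0>0$ such that for every $\phi\in B_{\eps_0}(0)\cap H^s(\T^3)$ the Cauchy problem \eqref{eq:nls} admits a unique solution on $[0,T_0)$ inside $X^s([0,T_0))$. The paragraph preceding Theorem \ref{thm:main-global} already announces this: when $\phi_\ast = 0$ there is no background profile for the multilinear estimates to depend on, so the fixed-point argument closes with constants that depend only on $\eps_0$.

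Given this uniform local statement, I first control the $H^1$-norm of the solution for all time. For $\phi\in B_{\eps_0}(0)$, the Sobolev embedding $H^1(\T^3)\hookrightarrow L^6(\T^3)$ gives $\|\phi\|_{L^6}\ls \|\phi\|_{H^1}<\eps_0$, so
\[
E(\phi)\ls \eps_0^2+\eps_0^6\ls \eps_0^2,\qquad M(\phi)\ls \eps_0^2.
\]
Conservation of $E$ and $M$ then yields
\[
\|u(t)\|_{H^1}^2 \le 2M(\phi)+2E(\phi)\ls \eps_0^2
\]
at every time $t$ in the lifespan of the solution. After possibly shrinking $\eps_0$ by an absolute constant, $u(t)$ therefore stays inside $B_{\eps_0/2}(0)$ throughout. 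Now I iterate: I apply the uniform local theorem on $[0,T_0)$, restart at $t=T_0/2$ with datum $u(T_0/2)\in B_{\eps_0/2}(0)\cap H^s(\T^3)$ to extend the solution to $[0,3T_0/2)$, and repeat. Because each step produces a fresh interval of length $T_0$ independent of the step, finitely many iterations cover any prescribed $[0,T)$. Uniqueness, persistence in $X^s([0,T))$, and Lipschitz continuity of the flow map on every finite interval follow by patching the corresponding statements on each subinterval; the Lipschitz constant will depend on $T$, which is permitted by the statement.

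\textbf{Main obstacle.} The entire analytic content lives in Theorem \ref{thm:main-local} (and in the refined trilinear Strichartz estimate that powers it); once that is in hand, Theorem \ref{thm:main-global} reduces to the standard conservation-law continuation argument sketched above. The one non-trivial point is verifying that inside the proof of Theorem \ref{thm:main-local} one may in fact take $\eps_0$ and $T_0$ to be universal when $\phi_\ast = 0$. For a generic reference profile $\phi_\ast$ the time $T(\phi_\ast)$ must genuinely depend on $\phi_\ast$ (concentrating a fixed amount of $H^1$-energy shows the scale cannot be universal), so the uniformity is not automatic from the qualitative statement and one has to open the proof. I expect, however, that the critical $X^s$-based contraction closes with absolute constants under the single smallness hypothesis $\|\phi\|_{H^1}<\eps_0$, so this reduces to a routine inspection rather than a genuine obstacle.
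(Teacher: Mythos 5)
Your proposal is correct and follows essentially the same route as the paper: the paper likewise observes that Part 1 of the local proof gives a uniform existence time (in fact the whole interval $[0,2\pi)$) under the single hypothesis $\|\phi\|_{H^1}\leq\eps$, and then combines the coercive bound $\|u(t)\|_{H^1}^2\leq 2E(\phi)+2M(\phi)\leq\|\phi\|_{H^1}^2+d^2\|\phi\|_{H^1}^6$ with indefinite iteration of the small-data contraction. The only content you omit is the paper's additional continuity/bootstrap argument needed for the focusing variant (where the energy is not sign-definite), but that case lies outside the statement as formulated.
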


The results of Theorem~\ref{thm:main-local} and
Theorem~\ref{thm:main-global} hold equally well for the focusing
problem
\[
i\partial_t u +\Delta u= -|u|^4u\,.
\]
On the other hand in the large data global theory the difference
between these two cases should of course be crucial.

At the moment we are not able to prove well-posedness in the energy
space for \eqref{eq:nls_0} on the three dimensional sphere $\S^3$.
Some progress in this direction was achieved in \cite{BGT07} where one
gains control on the second Picard iteration in $H^1$. However the
argument in \cite{BGT07} uses strongly separated interactions of the
spatial and the time frequencies, a fact which does not fit well with
the critical spaces machinery. In particular the control of the second
Picard iteration performed in \cite{BGT07} does not rely only on
$L^p_t$, $p>2$ properties of free solutions as is the case in the
present paper.

The outline of the paper is as follows: In Section \ref{sect:crit_fs}
we introduce some notation and the critical function spaces. In Section
\ref{sect:str} we prove trilinear Strichartz type estimates which
extend the results of J. Bourgain \cite{B93a}. In Section \ref{sect:pr} we show how
these estimates imply our main results stated above.

  We remark that for a proof of the crucial multilinear estimate in
  Proposition \ref{prop:multilinear_est} the
  refinements involving localizations to rectangles in Section \ref{sect:str} are not
  necessary.
  Indeed, estimate \eqref{eq:multilinear_est} could be
  derived from the dyadic tri-linear Strichartz type estimate
  \begin{equation}\label{eq:l2_est_alt}
    \|\prod_{j=1}^3P_{N_j} u_j\|_{L^2}\ls
    N_2^{1-\delta}N_3^{1+\delta}
    \prod_{j=1}^3\|P_{N_j}u_j\|_{Y^0},
  \end{equation}
  for some small $\delta>0$, which can be proved directly by
  orthogonality considerations, H\"older's inequality and the extension of Bourgain's estimate to critical function spaces
  \eqref{eq:str_emb_cubes}. However, if one follows this strategy of
  proof, critical spaces which are sensible to finer than dyadic
  scales would still be needed.

  We decided to present the stronger estimates
  because we believe that they are interesting in their own right,
  also for linear solutions. They may also prove useful in the study of the large data case.
  The technique we will use involves
  additional orthogonality considerations with respect to the temporal
  frequency of the solutions, not only with respect to the spatial
  frequencies. This circle of ideas has further applications, e.g. to
  $4d$ cubic Schr\"odinger equations, see \cite{HTT10b}.

\subsection*{Acknowledgments} The authors are grateful to the anonymous referees for their valuable comments. In particular, a suggestion of one of the referees lead to a modification and simplification of Proposition \ref{prop:str_linear_impr_linfty}, and as a consequence to an improvement of Corollary \ref{coro:str_linear_impr}.

\section{Critical function spaces}\label{sect:crit_fs}
\noindent
Throughout this section let $H$ be a separable Hilbert space over
$\C$. In our application this will be chosen as either $H^s(\T^3)$ or
$\C$. We will briefly introduce the function spaces $U^p$ and $V^p$
(we refer the reader to \cite[Section 2]{HHK09} for detailed proofs of
the basic properties) and use them to construct all relevant function
spaces for the present paper.

Let $\mathcal{Z}$ be the set of finite partitions
$-\infty<t_0<t_1<\ldots<t_K\leq \infty$ of the real line. If
$t_K=\infty$, we use the convention that $v(t_K):=0$ for all functions
$v: \R \to H$. Let $\chi_I:\R\to\R$ denote the sharp characteristic
function of a set $I\subset \R$.

\begin{definition}\label{def:u}
  Let $1\leq p <\infty$. For $\{t_k\}_{k=0}^K \in \mathcal{Z}$ and
  $\{\phi_k\}_{k=0}^{K-1} \subset H$ with
  $\sum_{k=0}^{K-1}\|\phi_k\|_{H}^p=1$ we call the piecewise
  defined function $a:\R \to H$,
  \begin{equation*}
    a=\sum_{k=1}^K\chi_{[t_{k-1},t_k)}\phi_{k-1}
  \end{equation*}
  a $U^p$-atom and we define the atomic space $U^p(\R,H)$ of all
  functions $u: \R \to H$ such that
  \begin{equation*}
    u=\sum_{j=1}^\infty \lambda_j a_j \; \text{ for } U^p\text{-atoms } a_j,\;
    \{\lambda_j\}\in \ell^1,
  \end{equation*}
  with norm
  \begin{equation}\label{eq:norm_u}
    \|u\|_{U^p}:=\inf \left\{\sum_{j=1}^\infty |\lambda_j|
      :\; u=\sum_{j=1}^\infty \lambda_j a_j,
      \,\lambda_j\in \C,\; a_j \text{ $U^p$-atom}\right\}.
  \end{equation}
\end{definition}

\begin{remark}\label{rmk:prop_up} The spaces $U^p(\R,H)$ are Banach spaces and
  we observe that $U^p(\R,H)\hookrightarrow L^\infty(\R;H)$. Every
  $u\in U^p(\R,H)$ is right-continuous and $u$ tends to $0$ for $t \to
  -\infty$.
\end{remark}

\begin{definition}\label{def:v}
  Let $1\leq p<\infty$.
  \begin{enumerate}
  \item We define $V^p(\R,H)$ as the space of all functions $v:\R\to
    H$ such that
    \begin{equation}\label{eq:norm_v}
      \|v\|_{V^p}
      :=\sup_{\{t_k\}_{k=0}^K \in \mathcal{Z}} \left(\sum_{k=1}^{K}
        \|v(t_{k})-v(t_{k-1})\|_{H}^p\right)^{\frac{1}{p}}
    \end{equation}
    is finite\footnote{Notice that here we use the convention
      $v(\infty)=0$}.
  \item Likewise, let $V^p_{rc}(\R,H)$ denote the closed subspace of
    all right-continuous functions $v:\R\to H$ such that $\lim_{t\to
      -\infty}v(t)=0$, endowed with the same norm \eqref{eq:norm_v}.
  \end{enumerate}
\end{definition}

\begin{remark}\label{rmk:prop_v}
  The spaces $V^p(\R,H)$, $V^p_{rc}(\R,H)$ are Banach spaces and
  satisfy $U^p(\R,H)\hookrightarrow V_{rc}^p(\R,H)\hookrightarrow
  L^\infty(\R;H)$.
\end{remark}

\begin{prop}\label{prop:emb_v_in_u} For  $1\leq p<q <\infty$
  we have $V^p_{rc}(\R,H) \hookrightarrow U^q(\R,H)$.
\end{prop}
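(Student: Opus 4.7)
The plan is to build a step-function approximation of $v$ at dyadic scales via stopping times, then write $v$ as a telescoping series whose increments are (multiples of) $U^q$-atoms summed by a geometric series. We may normalize $\|v\|_{V^p}=1$.

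First, for each integer $n\geq 0$, I would construct a partition by iterated stopping times: set $t^n_0=-\infty$, $v(t^n_0)=0$, and
\[
t^n_{k+1}:=\inf\{t>t^n_k:\|v(t)-v(t^n_k)\|_{H}\geq 2^{-n}\},
\]
where $\inf\emptyset:=\infty$. Right-continuity of $v$ forces $\|v(t^n_{k+1})-v(t^n_k)\|_H\geq 2^{-n}$ whenever $t^n_{k+1}<\infty$, so the definition of $\|v\|_{V^p}$ gives $K_n\cdot 2^{-np}\leq 1$, i.e. the number of nontrivial steps satisfies $K_n\leq 2^{np}$ (in particular the partition lies in $\mathcal{Z}$, and there can be no accumulation point). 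Define the left-continuous step function
\[
v_n(t):=\sum_{k\geq 0}\chi_{[t^n_k,t^n_{k+1})}(t)\,v(t^n_k),
\]
so that by construction $\|v(t)-v_n(t)\|_H<2^{-n}$ for every $t$, hence $v_n\to v$ uniformly as $n\to\infty$.

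Next, I would write the telescoping decomposition $v=v_0+\sum_{n\geq 1}(v_n-v_{n-1})$, which converges in $L^\infty(\R;H)$. The function $v_n-v_{n-1}$ is piecewise constant on the common refinement of the two partitions, which has at most $K_n+K_{n-1}\leq 2\cdot 2^{np}$ intervals, and on each such interval the triangle inequality gives a value $\phi$ with $\|\phi\|_H\leq 2^{-n}+2^{-(n-1)}\leq 3\cdot 2^{-n}$. Therefore
\[
\sum_{\text{intervals}}\|\phi\|_H^{\,q}\leq 2\cdot 2^{np}\cdot(3\cdot 2^{-n})^{q}=C_q\,2^{n(p-q)},
\]
which, after dividing out this normalization, exhibits $v_n-v_{n-1}$ as a scalar multiple of a $U^q$-atom of size at most $C_q^{1/q}\,2^{-n(q-p)/q}$. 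A similar (and easier) estimate treats $v_0$.

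Summing the series gives
\[
\|v\|_{U^q}\leq \|v_0\|_{U^q}+\sum_{n=1}^\infty\|v_n-v_{n-1}\|_{U^q}\lesssim_{p,q} \sum_{n=0}^\infty 2^{-n(q-p)/q}<\infty,
\]
where the convergence of the geometric series uses precisely the hypothesis $p<q$. This yields the embedding with constant depending only on $p,q$. The main delicate point I expect is the stopping-time step: one must use right-continuity of $v$ (and hence the passage to the subspace $V^p_{rc}$) to guarantee both the lower bound $\|v(t^n_{k+1})-v(t^n_k)\|_H\geq 2^{-n}$ on each jump and the uniform $2^{-n}$ approximation by $v_n$. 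Without that property the stopping times can fail to register jumps and the entire counting argument collapses.
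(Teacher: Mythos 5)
Your proof is correct, and it is essentially the standard stopping-time argument: the paper itself gives no proof but defers to Hadac--Herr--Koch \cite[Section 2]{HHK09}, where the embedding $V^p_{rc}\hookrightarrow U^q$ is established by exactly this kind of dyadic-scale greedy partition, telescoping decomposition into near-atoms, and geometric summation using $p<q$. You also correctly identify where right-continuity and the normalization $\lim_{t\to-\infty}v(t)=0$ enter, which is the only delicate point.
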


We also record a useful interpolation property of the spaces $U^p$ and
$V^p$, cf. \cite[Proposition 2.20]{HHK09}.
\begin{lemma}\label{lem:interpol}
  Let $q_1,q_2,q_3>2$, $E$ be a Banach space and
  \[T:U^{q_1}\times U^{q_2}\times U^{q_3}\to E\] be a bounded,
  tri-linear operator with $\|T(u_1,u_2,u_3)\|_E \leq C
  \prod_{j=1}^3\|u_j\|_{U^{q_j}}$.  In addition, assume that there
  exists $C_2\in (0,C]$ such that the estimate $\|T(u_1,u_2,u_3)\|_E
  \leq C_2 \prod_{j=1}^3\|u_j\|_{U^2}$ holds true.  Then, $T$
  satisfies the estimate
  \[
  \|T(u_1,u_2,u_3)\|_E \ls C_2
  (\ln\frac{C}{C_2}+1)^3\prod_{j=1}^3\|u_j\|_{V^2}, \quad u_j \in
  V^2_{rc},\; j=1,2,3.
  \]
\end{lemma}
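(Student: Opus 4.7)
The plan is to interpolate by decomposing each $V^2_{rc}$ input at a scale parameter $M\geq 1$ into a $U^2$ piece with polynomially bounded norm and a $U^{q_j}$ remainder with exponentially small norm, and then to expand the trilinear form. After normalizing so that $\|v_j\|_{V^2}\leq 1$ for $j=1,2,3$, the key ingredient is the following quantitative refinement of the embedding $V^2_{rc}\hookrightarrow U^{p}$ of Proposition~\ref{prop:emb_v_in_u}: for every $v\in V^2_{rc}$ with $\|v\|_{V^2}\leq 1$, every $p>2$, and every integer $M\geq 1$, there exist $u\in U^2$ and $w\in U^p$ with
\[
v=u+w,\qquad \|u\|_{U^2}\ls M,\qquad \|w\|_{U^p}\ls 2^{-M(\frac12-\frac1p)}.
\]
This is a standard feature of the $U^p/V^p$ theory and is produced by a greedy selection of stopping times at which the partial sums defining the $V^2$-norm first cross successive dyadic thresholds; see \cite{HHK09}.

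Applied to each $v_j$, this gives $v_j=u_j+w_j$ with $\|u_j\|_{U^2}\ls M$ and $\|w_j\|_{U^{q_j}}\ls 2^{-M\alpha}$, where $\alpha:=\min_{j=1,2,3}(\tfrac12-\tfrac1{q_j})>0$. Expand
\[
T(v_1,v_2,v_3)=\sum_{\eps\in\{0,1\}^3}T(z_1^{\eps_1},z_2^{\eps_2},z_3^{\eps_3}),\qquad z_j^0:=u_j,\; z_j^1:=w_j.
\]
The purely $U^2$ term, corresponding to $\eps=(0,0,0)$, is bounded via the hypothesis with constant $C_2$ as $\|T(u_1,u_2,u_3)\|_E\leq C_2\prod_{j}\|u_j\|_{U^2}\ls C_2 M^3$. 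Each of the remaining seven terms contains at least one $w$-factor; we bound it using the hypothesis with constant $C$, employing the trivial embedding $U^2\hookrightarrow U^{q_j}$ (valid because $q_j>2$) on the $u$-factors. This yields an upper bound of the form $C\,M^{3-|\eps|}\,2^{-M\alpha|\eps|}$, where $|\eps|:=\eps_1+\eps_2+\eps_3\geq 1$. Summing over $\eps$,
\[
\|T(v_1,v_2,v_3)\|_E\ls C_2 M^3 + C\,M^2\,2^{-M\alpha}.
\]

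Finally, choose $M$ to be the smallest positive integer with $2^{-M\alpha}\leq C_2/C$, i.e.\ $M\simeq 1+\alpha^{-1}\log_2(C/C_2)$ (recall $C_2\leq C$, so $M\geq 1$). The second term is then dominated by the first, and one obtains
\[
\|T(v_1,v_2,v_3)\|_E\ls C_2\,M^3\ls C_2\,\bigl(\ln(C/C_2)+1\bigr)^3,
\]
which is the claim by tri-linearity. The only nontrivial step is the quantitative decomposition of a $V^2$ function into a $U^2$ main part of norm $\ls M$ and a $U^p$ remainder of norm $\ls 2^{-M(1/2-1/p)}$; everything else is an explicit expansion, a trivial embedding, and a single optimization. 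The exponent $3$ in the logarithm simply reflects the three $\|u_j\|_{U^2}\ls M$ factors in the purely $U^2$ term.
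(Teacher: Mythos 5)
Your argument is correct, but it is organized differently from the paper's. The paper proves the lemma by freezing two of the three arguments and invoking the one-variable interpolation result \cite[Proposition 2.20]{HHK09} for the remaining one, then iterating this three times (using $V^2_{rc}\hookrightarrow U^{q_j}$ and $\|u_j\|_{U^{q_j}}\leq\|u_j\|_{U^2}$ to set up the hypotheses at each stage); each iteration contributes one factor of $\ln\frac{C}{C_2}+1$. You instead unfold the mechanism behind that cited proposition: you apply the quantitative decomposition $V^2_{rc}\ni v=u+w$ with $\|u\|_{U^2}\ls M$ and $\|w\|_{U^{q_j}}$ exponentially small in $M$ to all three inputs simultaneously, expand $T$ into $2^3$ terms, estimate the pure-$U^2$ term with $C_2$ and every term containing a $w$-factor with $C$, and optimize over $M$. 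The two routes rest on the same engine (the greedy $U^2+U^p$ splitting of a $V^2$ function, which is exactly what drives \cite[Proposition 2.20]{HHK09}), so neither is more general; yours is self-contained modulo the decomposition lemma and makes transparent where the cube of the logarithm comes from (three factors of $M$ in the main term), while the paper's is shorter given the black box and avoids restating the decomposition. Your bookkeeping is sound: the cross terms are dominated by $C\,M^{2}2^{-M\alpha}$ since $|\eps|\geq 1$, the choice $M\simeq 1+\alpha^{-1}\log_2(C/C_2)$ is admissible because $C_2\leq C$, and the resulting implicit constant depends only on $q_1,q_2,q_3$ through $\alpha$, exactly as in the paper.
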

\begin{proof}
  For fixed $u_2,u_3$ let $T_1u:=T(u,u_2,u_3)$. From the assumption we
  have
  \begin{align*}
    \|T_1u\|_E&\leq D_1 \|u\|_{U^{q_1}} \text{ where }
    D_1=C \|u_2\|_{U^{q_2}}\|u_3\|_{U^{q_3}},\\
    \|T_1u\|_E&\leq D_1' \|u\|_{U^{2}} \text{ where } D_1'=C_2
    \|u_2\|_{U^{2}}\|u_3\|_{U^{2}}.
  \end{align*}
  An application of \cite[Proposition 2.20]{HHK09} and the bound
  $\|u_j\|_{U^{q_j}}\leq \|u_j\|_{U^{2}}$ for $j=2,3$ yield
  \[
  \|T_1u\|_E \ls C_2 (\ln \frac{C}{C_2} +1)
  \|u\|_{V^2}\|u_2\|_{U^{2}}\|u_3\|_{U^{2}}.
  \]
  The embedding $V^2_{rc}\hookrightarrow U^{q_j}$ allows us to repeat
  this argument with respect to the second and third argument of $T$.
\end{proof}

We define the spatial Fourier coefficients
\[
\widehat{f}(\xi):=(2\pi)^{-3/2}\int_{ [0,2\pi]^{3}} e^{-ix\cdot \xi}
f(x) \;dx,\; \xi \in \Z^{3}.
\]
and the space time Fourier transform
\[
\mathcal{F}u(\tau,\xi):=(2\pi)^{-2}\int_{ \R\times [0,2\pi]^{3}}
e^{-i(x\cdot \xi+t\tau)} u(t,x) \;dt dx,\; (\tau,\xi) \in
\R\times\Z^{3}.
\]

We fix a non-negative, even function $\psi\in C_0^\infty((-2,2))$ with
$\psi(s)=1$ for $|s|\leq 1$ in order to define a partition of unity:
for a dyadic number $N\geq 1$ we set
\[
\psi_N(\xi)=\psi\Big(\frac{|\xi|}{N}\Big)-\psi\Big(\frac{2|\xi|}{N}\Big),
\quad \text{for } N\geq 2, \quad \psi_1(\xi)=\psi(|\xi|).
\]
We define the frequency localization operators $P_N:L^2(\T^3)\to
L^2(\T^3)$ as the Fourier multiplier with symbol $\psi_N$, and for
brevity we also write $u_N:=P_Nu$. Moreover, we define $P_{\leq
  N}:=\sum_{1\leq M\leq N}P_M$.

More generally, for a set $S\subset \Z^{3}$ we define $P_S$ as the Fourier
projection operator with symbol $\chi_S$, where $\chi_S$ denotes the
characteristic function of $S$.

Let $s \in \R$.  We define the Sobolev space $H^s(\T^3)$ as the space
of all $L^2(\T^3)$-functions for which the norm
\[
\|f\|_{H^s(\T^3)}:=\left(\sum_{\xi \in \Z^3} \lb \xi\rb^{2s} |\hat
  f(\xi)|^2\right)^{\frac12} \approx \left(\sum_{N \geq 1}N^{2s} \|P_N
  f\|_{L^2(\T^3)}^2\right)^{\frac12}
\]
is finite.

Corresponding to the linear Schr\"odinger flow we define
\begin{definition}\label{def:delta_norm}
  For $s \in \R$ we let $U^p_\Delta H^s $ resp. $V^p_\Delta H^s$ be
  the spaces of all functions $u:\R\to H^s(\T^3)$ such that $t \mapsto
  e^{-it \Delta}u(t)$ is in $U^p(\R,H^s)$ resp. $V^p(\R,H^s)$, with
  norms
  \begin{equation}\label{eq:delta_norm}
    \| u\|_{U^p_\Delta H^s} = \| e^{-it \Delta} u\|_{U^p(\R,H^s)},
    \qquad 
    \| u\|_{V^p_\Delta H^s} = \| e^{-it \Delta} u\|_{V^p(\R,H^s)}.
  \end{equation}
\end{definition}
Spaces of this type have been succesfully used as replacements for
$X^{s,b}$ spaces which are still effective at critical scaling, see
for instance \cite{KoTa05}, \cite{KoTa07}, \cite{HHK09}.

\begin{remark}\label{rem:ext}
  Proposition \ref{prop:emb_v_in_u} and Lemma \ref{lem:interpol}
  naturally extend to the spaces $U^p_{\Delta}H^s$ and
  $V^p_{\Delta}H^s$.
\end{remark}

In the context of this article this would suggest that spaces of the
form $U^p_\Delta H^1$ and $V^p_\Delta H^1$ should be useful.  However,
here we introduce a small variation on this theme.

\begin{definition}\label{def:xt}
  For $s \in \R$ we define $X^{s}$ as the space of all functions $u:
  \R \to H^s(\T^3)$ such that for every $\xi \in \Z^3$
  the map $t \mapsto e^{it|\xi|^2} \widehat{u(t)}(\xi)$ is in
  $U^2(\R,\C)$, and for which the norm
  \begin{equation}\label{eq:xtnorm}
    \|u\|_{X^{s}}:=\left(\sum_{\xi \in \Z^3}\lb \xi\rb ^{2s}
      \|e^{it|\xi|^2}\widehat{u(t)}(\xi)\|_{U^2_t}^2\right)^{\frac12}
  \end{equation}
  is finite.
\end{definition}

\begin{definition}\label{def:yt}
  Let $s \in \R$. We define $Y^{s}$ as the space of all functions
  $u:\R \to H^s(\T^3)$ such that for every $\xi\in \Z^3$
  the map $t \mapsto e^{it|\xi|^2}\widehat{u(t)}(\xi)$ is in
  $V^2_{rc}(\R,\C)$, and for which the norm
  \begin{equation}\label{eq:ytnorm}
    \|u\|_{Y^{s}}
    :=\left(\sum_{\xi\in \Z^3}
      \lb\xi\rb ^{2s} \|e^{it|\xi|^2}\widehat{u(t)}(\xi)\|_{V^2_t}^2\right)^{\frac12}
  \end{equation}
  is finite.
\end{definition}

As usual, for a time interval $I\subset \R$ we also consider the
restriction spaces $X^s(I)$, etc. It is easy to relate the $X^s$ and
$Y^s$ spaces with the previously defined $V^p_\Delta H^s$ and
$U^p_\Delta H^s $. Using also Remark \ref{rmk:prop_v}, we have:

\begin{prop}\label{prop:x_in_y}
  The following continuous embeddings hold:
  \[
  U^2_\Delta H^s \hookrightarrow X^s \hookrightarrow Y^s
  \hookrightarrow V^2_\Delta H^s.
  \]
\end{prop}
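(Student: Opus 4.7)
The plan is to handle the three embeddings one at a time, using throughout the single key identity that the spatial Fourier coefficient of $e^{-it\Delta}u(t)$ at frequency $\xi\in\Z^3$ is precisely $e^{it|\xi|^2}\widehat{u(t)}(\xi)$, together with Plancherel's identity which converts the $H^s(\T^3)$-norm into the weighted $\ell^2_\xi$-sum with weight $\langle\xi\rangle^{2s}$.

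For the first embedding $U^2_\Delta H^s\hookrightarrow X^s$, I would set $v=e^{-it\Delta}u$ and verify the desired bound on a single $U^2(\R,H^s)$-atom, then promote via the atomic decomposition. If $v=\sum_{k=1}^K\chi_{[t_{k-1},t_k)}\phi_{k-1}$ with $\sum_k \|\phi_{k-1}\|_{H^s}^2=1$, then for each fixed $\xi$ the scalar function $t\mapsto \widehat{v(t)}(\xi)=\sum_k \chi_{[t_{k-1},t_k)}(t)\widehat{\phi}_{k-1}(\xi)$ is, after dividing by $c_\xi:=\bigl(\sum_k|\widehat{\phi}_{k-1}(\xi)|^2\bigr)^{1/2}$, a scalar $U^2(\R,\C)$-atom in the sense of Definition \ref{def:u}. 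Hence $\|\widehat{v(t)}(\xi)\|_{U^2_t}^2\leq c_\xi^2$, and summing with the $\langle\xi\rangle^{2s}$ weight gives
\[
\sum_{\xi}\langle\xi\rangle^{2s}\|\widehat{v(t)}(\xi)\|_{U^2_t}^2\leq \sum_k\sum_\xi\langle\xi\rangle^{2s}|\widehat{\phi}_{k-1}(\xi)|^2=\sum_k\|\phi_{k-1}\|_{H^s}^2=1.
\]
This shows $\|u\|_{X^s}\leq 1$ for any atom. For a general $U^2$-function $v=\sum_j \lambda_j a_j$, the map $w\mapsto \bigl(\sum_\xi\langle\xi\rangle^{2s}\|\widehat{w(t)}(\xi)\|_{U^2_t}^2\bigr)^{1/2}$ is a seminorm (by the triangle inequality for $U^2_t$ and Minkowski in $\ell^2_\xi$), so it is bounded by $\sum_j|\lambda_j|$; taking the infimum over atomic decompositions then yields $\|u\|_{X^s}\lesssim \|u\|_{U^2_\Delta H^s}$.

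The second embedding $X^s\hookrightarrow Y^s$ is immediate from the scalar inclusion $U^2(\R,\C)\hookrightarrow V^2_{rc}(\R,\C)$ recorded in Remark \ref{rmk:prop_v}, applied pointwise in $\xi$ under the identical $\ell^2_\xi$ weighting that defines the two norms in \eqref{eq:xtnorm} and \eqref{eq:ytnorm}.

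For the last embedding $Y^s\hookrightarrow V^2_\Delta H^s$, I would again set $v=e^{-it\Delta}u$ and trade a sum for a supremum. For any partition $\{t_k\}_{k=0}^K\in\mathcal{Z}$, Plancherel gives
\[
\sum_{k=1}^K\|v(t_k)-v(t_{k-1})\|_{H^s}^2 = \sum_\xi\langle\xi\rangle^{2s}\sum_{k=1}^K\bigl|\widehat{v(t_k)}(\xi)-\widehat{v(t_{k-1})}(\xi)\bigr|^2 \leq \sum_\xi\langle\xi\rangle^{2s}\|\widehat{v(t)}(\xi)\|_{V^2_t}^2=\|u\|_{Y^s}^2,
\]
and the supremum over partitions in $\mathcal{Z}$ yields $\|u\|_{V^2_\Delta H^s}\leq \|u\|_{Y^s}$. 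No step presents a genuine obstacle; the only thing to keep straight is the bookkeeping that the $e^{it|\xi|^2}$-twist appearing in Definitions \ref{def:xt}--\ref{def:yt} is exactly what removes the free evolution when one compares with Definition \ref{def:delta_norm}.
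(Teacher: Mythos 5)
Your proof is correct, and it fills in exactly the argument the paper leaves implicit (the paper gives no proof beyond invoking Remark \ref{rmk:prop_v}): the identity $\widehat{e^{-it\Delta}u(t)}(\xi)=e^{it|\xi|^2}\widehat{u(t)}(\xi)$, the atom-by-atom verification for $U^2$, the scalar embedding $U^2\hookrightarrow V^2_{rc}$ for the middle inclusion, and the interchange of the $\ell^2_\xi$-sum with the supremum over partitions for the last one. No issues.
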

The motivation for the introduction of the $X^s$ and $Y^s$ spaces lies
in the following

\begin{coro}
  Let $\Z^3 = \cup C_k$ be a partition of $\Z^3$. Then
  \begin{equation}
    \left(\sum_k \| P_{C_k} u\|_{V^2_\Delta H^s}^2\right)^{\frac12} \lesssim  \| u\|_{Y^s}.
  \end{equation}
\end{coro}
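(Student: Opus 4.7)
The plan is to reduce everything to the Fourier side and exploit the fact that the supremum over partitions of a sum is bounded by the sum of the suprema.

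First I would unfold the definitions. For each $k$, write
\[
\|P_{C_k}u\|_{V^2_\Delta H^s}^2 = \sup_{\{t_j\}\in\mathcal Z}\sum_{j}\|e^{-it_j\Delta}P_{C_k}u(t_j)-e^{-it_{j-1}\Delta}P_{C_k}u(t_{j-1})\|_{H^s}^2.
\]
Setting $a_\xi(t):=e^{it|\xi|^2}\widehat{u(t)}(\xi)$, the Fourier coefficient at $\xi$ of $e^{-it\Delta}P_{C_k}u(t)$ is $\chi_{C_k}(\xi)a_\xi(t)$, so Plancherel gives
\[
\|e^{-it_j\Delta}P_{C_k}u(t_j)-e^{-it_{j-1}\Delta}P_{C_k}u(t_{j-1})\|_{H^s}^2 = \sum_{\xi\in C_k}\langle\xi\rangle^{2s}|a_\xi(t_j)-a_\xi(t_{j-1})|^2.
\]

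Next, for any fixed partition $\{t_j\}$, swap the two sums (both nonnegative) and bound the inner $\sum_j$ by the $V^2$ norm of $a_\xi$:
\[
\sum_j\sum_{\xi\in C_k}\langle\xi\rangle^{2s}|a_\xi(t_j)-a_\xi(t_{j-1})|^2 \le \sum_{\xi\in C_k}\langle\xi\rangle^{2s}\|a_\xi\|_{V^2}^2.
\]
The right-hand side is independent of the partition, so taking the supremum on the left yields $\|P_{C_k}u\|_{V^2_\Delta H^s}^2\le \sum_{\xi\in C_k}\langle\xi\rangle^{2s}\|a_\xi\|_{V^2}^2$. Summing over $k$ and using that the $C_k$ partition $\Z^3$ gives
\[
\sum_k\|P_{C_k}u\|_{V^2_\Delta H^s}^2\le\sum_{\xi\in\Z^3}\langle\xi\rangle^{2s}\|a_\xi\|_{V^2}^2=\|u\|_{Y^s}^2,
\]
and taking square roots concludes the proof.

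There is essentially no obstacle here: the content is that the $V^2_\Delta H^s$ norm, which couples all frequencies through a common partition of the time axis, is controlled by the $Y^s$ norm precisely because $Y^s$ allows a \emph{separate} partition for each frequency $\xi$. The step $\sup\sum\le\sum\sup$ is what quantifies this gain, and it is the only nontrivial ingredient; the rest is Plancherel and bookkeeping.
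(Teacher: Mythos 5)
Your proof is correct and is essentially the argument the paper has in mind: the corollary is stated as an immediate consequence of the definitions (via the embedding $Y^s \hookrightarrow V^2_\Delta H^s$ of Proposition \ref{prop:x_in_y}, whose proof is exactly your interchange $\sup\sum \le \sum\sup$, combined with the evident $\ell^2$-additivity of the $Y^s$ norm over a partition of $\Z^3$). Your write-up even yields the estimate with constant $1$.
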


The following Proposition follows immediately from the atomic
structure of $U^2$.
\begin{prop}\label{prop:linear}
  Let $s\geq 0$, $0<T\leq \infty$ and $\phi \in H^s(\T^3)$. Then, for
  the linear solution $u(t):=e^{it\Delta}\phi$ for $t\geq 0$ we have
  $u \in X^s([0,T))$ and
  \begin{equation}\label{eq:linear}
    \|u\|_{X^s([0,T))}\leq \|\phi\|_{H^s}.
  \end{equation}
\end{prop}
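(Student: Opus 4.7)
The plan is to exploit the fact that for a free Schr\"odinger evolution, the phase-modulated Fourier coefficients $e^{it|\xi|^2}\widehat{u(t)}(\xi)$ are \emph{constant} in $t$, so that after a suitable extension from $[0,T)$ to $\R$ they become indicator-type functions, which are precisely the building blocks (atoms) of $U^2$.

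\medskip

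\noindent\textbf{Step 1 (Extension).} Since $X^s([0,T))$ is defined as a restriction space, it suffices to exhibit a single extension $\tilde u : \R \to H^s(\T^3)$ with $\tilde u|_{[0,T)} = u|_{[0,T)}$ and $\|\tilde u\|_{X^s} \leq \|\phi\|_{H^s}$. I would take the natural extension $\tilde u(t) := \chi_{[0,\infty)}(t)\, e^{it\Delta}\phi$, which vanishes for $t<0$ and agrees with $u$ on $[0,T)$.

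\medskip

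\noindent\textbf{Step 2 (Fourier-side computation).} For each $\xi \in \Z^3$, since $\widehat{e^{it\Delta}\phi}(\xi) = e^{-it|\xi|^2}\widehat\phi(\xi)$, one has
\[
 e^{it|\xi|^2}\widehat{\tilde u(t)}(\xi) \;=\; \widehat\phi(\xi)\,\chi_{[0,\infty)}(t).
\]
This is a single-step function in $t$, corresponding to the partition $t_0 = 0$, $t_1 = \infty$ (or equivalently $t_0 = -N$, $t_1 = 0$, $t_2 = \infty$ with a zero first piece). By Definition \ref{def:u}, it is exactly $|\widehat\phi(\xi)|$ times a $U^2$-atom in $U^2(\R,\C)$, so
\[
 \bigl\|\,e^{it|\xi|^2}\widehat{\tilde u(t)}(\xi)\,\bigr\|_{U^2_t} \;\leq\; |\widehat\phi(\xi)|.
\]

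\medskip

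\noindent\textbf{Step 3 (Summation).} Squaring, weighting by $\langle\xi\rangle^{2s}$ and summing over $\xi \in \Z^3$ gives
\[
 \|\tilde u\|_{X^s}^2 \;\leq\; \sum_{\xi\in\Z^3} \langle\xi\rangle^{2s}|\widehat\phi(\xi)|^2 \;=\; \|\phi\|_{H^s}^2,
\]
and taking square roots and passing to the restriction norm yields \eqref{eq:linear}.

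\medskip

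There is really no hard step here: the whole point is that the $X^s$-norm was designed precisely so that free solutions tautologically lie in the unit ball of $X^s$ when $\phi$ is in the unit ball of $H^s$. The only point requiring a touch of care is making the extension from $[0,T)$ to $\R$ compatible with the $U^2$-convention that elements vanish at $-\infty$ (handled by multiplying by $\chi_{[0,\infty)}$), and verifying that a scalar step function indeed has $U^2$-norm equal to the size of its jump.
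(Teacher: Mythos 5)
Your proof is correct and is exactly the argument the paper has in mind: the paper states that the proposition ``follows immediately from the atomic structure of $U^2$,'' and your Steps 1--3 simply spell out that the modulated Fourier coefficient $e^{it|\xi|^2}\widehat{\tilde u(t)}(\xi)=\widehat\phi(\xi)\chi_{[0,\infty)}(t)$ is $|\widehat\phi(\xi)|$ times a single-step $U^2$-atom, after which Plancherel-type summation in $\xi$ gives the bound. Nothing further is needed.
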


Let $f\in L^1_{loc}([0,\infty);L^2(\T^3))$ and define
\begin{equation}\label{eq:duhamel}
  \mathcal{I}(f)(t):=\int_{0}^t e^{i(t-s)\Delta} f(s) ds
\end{equation}
for $t \geq 0$ and $\mathcal{I}(f)(t)=0$ otherwise. We have the
following linear estimate for the Duhamel term.
\begin{prop}\label{prop:inhom_est}
  Let $s \geq 0$ and $T>0$. For $f \in L^1([0,T);H^s(\T^3))$ we have
  $\mathcal{I}(f)\in X^s([0,T))$ and
  \begin{equation}\label{eq:inhom_est}
    \|\mathcal{I}(f)\|_{X^s([0,T))}
    \leq \sup_{v \in Y^{-s}([0,T)): \|v\|_{Y^{-s}}=1}
    \left|\int_0^T \int_{\T^3}f(t,x)\overline{v(t,x)} dx
    dt\right| 
  \end{equation}
\end{prop}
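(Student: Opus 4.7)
The proof combines the $U^2$--$V^2$ duality at each spatial Fourier mode with a Cauchy--Schwarz step in the weighted $\ell^2_\xi$ structure of the $X^s$ norm. This parallels the analogous duality results for $U^p_\Delta H^s$ from \cite{HHK09}, adapted to the frequency-by-frequency refinement built into $X^s$ and $Y^s$.

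First I would fix an extension of $\mathcal{I}(f)$ to all of $\R$, setting $U(t):=0$ for $t<0$ and $U(t):=e^{i(t-T)\Delta}\mathcal{I}(f)(T^-)$ for $t\geq T$, so that $\|\mathcal{I}(f)\|_{X^s([0,T))}\leq \|U\|_{X^s(\R)}$. Writing $\phi_\xi(s):=e^{is|\xi|^2}\widehat{f(s)}(\xi)\chi_{[0,T)}(s)$ and $\Phi_\xi(t):=\int_{-\infty}^t \phi_\xi(s)\,ds$, one has $\Phi_\xi(t)=e^{it|\xi|^2}\widehat{U(t)}(\xi)$, with $\Phi_\xi$ absolutely continuous, vanishing at $-\infty$, and constant on $[T,\infty)$. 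The core input is then the frequency-by-frequency duality
\begin{equation*}
\|\Phi_\xi\|_{U^2}\;\leq\;\sup_{\psi\in V^2_{rc},\ \|\psi\|_{V^2}=1}\left|\int_0^T \phi_\xi(t)\,\overline{\psi(t)}\,dt\right|,
\end{equation*}
which is the $(U^2)^*=V^2$ pairing $(u,\psi)\mapsto \int u'\,\overline{\psi}\,dt$ applied to $u=\Phi_\xi$, as established in \cite[Sect.~2]{HHK09}.

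To aggregate over $\xi$, I would use $\ell^2$ duality to write
\begin{equation*}
\|U\|_{X^s(\R)}\;=\;\sup_{\sum_\xi |c_\xi|^2\leq 1}\ \sum_\xi c_\xi\,\langle\xi\rangle^s\,\|\Phi_\xi\|_{U^2},
\end{equation*}
and for each $\xi$ pick a near-optimal unit-norm $\psi_\xi\in V^2_{rc}$. The test function $v$ defined via its Fourier coefficients by $\widehat{v(t)}(\xi):=\bar c_\xi\,\langle\xi\rangle^s\,e^{-it|\xi|^2}\,\psi_\xi(t)$ then satisfies $\|v\|_{Y^{-s}}^2=\sum_\xi |c_\xi|^2\|\psi_\xi\|_{V^2}^2\leq 1$, while Plancherel yields
\begin{equation*}
\sum_\xi c_\xi\,\langle\xi\rangle^s\int_0^T \phi_\xi\,\overline{\psi_\xi}\,dt\;=\;\int_0^T\!\!\int_{\T^3} f(t,x)\,\overline{v(t,x)}\,dx\,dt.
\end{equation*}
Taking real parts (absorbing phases into the $\psi_\xi$) and passing to the supremum on both sides delivers the desired inequality.

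The main obstacle is the first displayed inequality, i.e.\ the $U^2$--$V^2$ duality for antiderivatives at a fixed Fourier mode. It requires careful bookkeeping of the $V^2_{rc}$ conventions (right-continuity, vanishing at $-\infty$, and the $v(\infty)=0$ convention in the definition of the $V^2$ norm) together with the discrete integration-by-parts formula relating $U^p$ atoms to $V^{p'}$ step functions; this is exactly the abstract content of the duality theorems in \cite{HHK09}. A minor secondary point is that the $\psi_\xi$ depend on $\xi$, but as $\Z^3$ is countable this is handled by choosing them one at a time (or by first truncating to finitely many frequencies and then passing to the limit).
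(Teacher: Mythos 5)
Your proposal is correct and follows essentially the same route as the paper's proof: the same free-evolution extension past $t=T$, the frequency-by-frequency $U^2$--$V^2$ duality from \cite{HHK09} applied to the antiderivative $\Phi_\xi$, aggregation via $\ell^2_\xi$ duality, and assembly of the test function $v$ from the near-optimal $\psi_\xi$ followed by Plancherel. The only cosmetic difference is that the paper runs the near-optimal selections with explicit $\eps$-losses (weights $2^{-|\xi|^2}\eps$) and notes that the $v_\xi$ may be taken supported in $[0,T)$, points you handle equivalently.
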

\begin{proof}
  We extend $\mathcal{I}(f)$ continuously by
  $\mathcal{I}(f)(t):=\int_{0}^T e^{i(t-s)\Delta} f(s) ds$ for $t>T$.
  For each $\xi$ we have that $t \to
  e^{it|\xi|^2}\widehat{\mathcal{I}(f)(t)}(\xi)$ is absolutely
  continuous and of bounded variation, hence in $U^2$. Let $\eps>0$.
  There is a sequence $(b_\xi)_{\xi \in \Z^3}\in \ell^2(\Z^3)$,
  $\|(b_\xi)\|_{\ell^2}=1$ such that
  \[
  \|\mathcal{I}(f)\|_{X^s([0,T))}\leq \sum_{\xi \in \Z^3} b_\xi \lb
  \xi\rb^s
  \left\|\int_0^t\chi_{[0,T)}(s)e^{is|\xi|^2}\widehat{f(s)}(\xi)ds\right\|_{U^2_t}+\eps.
  \]
  By duality of $U^2$ and $V^2$, see \cite [Theorem 2.8 and
  Proposition 2.10]{HHK09}, for each $\xi\in \Z^3$ there exists
  $v_\xi\in V^2_t$, $\|v_\xi\|_{V^2}=1$, such that
  \[
  \left\|\int_0^t\chi_{[0,T)}(s)e^{is|\xi|^2}\widehat{f(s)}(\xi)ds\right\|_{U^2_t}\leq\left| 
  \int_0^T \widehat{f(s)}(\xi)\overline{e^{-is|\xi|^2} v_\xi(s)}
  ds\right| +2^{-|\xi|^2}\eps.
  \]
  Evidently, we can choose $v_\xi$ to be right-continuous with $\supp
  v_\xi\subseteq [0,T)$ without changing the above expression.  Define
  \[
  v(t,x)=(2\pi)^{-3/2}\sum_{\xi\in \Z^3} b_\xi \lb \xi\rb^s e^{ix\cdot
    \xi}e^{-it|\xi|^2} v_\xi(t).
  \]
  Then, $v \in Y^{-s}([0,T))$, $\|v\|_{Y^{-s}}\leq 1$, and
  \[
  \|\mathcal{I}(f)\|_{X^s([0,T))}\leq \sum_{\xi \in \Z^3} \left| \int_0^T
  \widehat{f(t)}(\xi) \overline{\widehat{v(t)(\xi)}} dt\right|  +c\eps,
  \]
  and the claim follows by dominated convergence and Plancherel's
  identity.
\end{proof}

\begin{remark}\label{rem:l1bound}
In particular, Proposition \ref{prop:inhom_est} implies that any
estimate on $f$ in $L^1([0,T);H^s(\T^3))$ bounds $\mathcal{I}(f)$ in
$X^s([0,T))$.
\end{remark}
\section{Strichartz estimates}\label{sect:str}
We first recall Bourgain's $L^p$ estimates of Strichartz type.  Let
$\mathcal{C}_N$ denote the collection of cubes $C\subset \Z^3$ of
side-length $N\geq 1$ with arbitrary center and orientation.

\begin{prop}[Bourgain \cite{B93a}]\label{prop:str}
  Let $p>4$. For all $N\geq 1$ we have
  \begin{equation}\label{eq:str_linear}
    \|P_N e^{it \Delta}\phi\|_{L^p(\T\times \T^3)}
    \ls N^{\frac32-\frac5p} \|P_N \phi\|_{L^2(\T^3)}.
  \end{equation}
  More generally, for all $C \in \mathcal{C}_N$ we have
  \begin{equation}\label{eq:str_linear_cubes}
    \|P_C e^{it \Delta}\phi\|_{L^p(\T\times \T^3)}
    \ls N^{\frac32-\frac5p} \|P_C \phi\|_{L^2(\T^3)}.
  \end{equation}
\end{prop}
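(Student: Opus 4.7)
The plan is to first establish \eqref{eq:str_linear}, and then deduce \eqref{eq:str_linear_cubes} by Galilean invariance. Write $u(t,x)=P_N e^{it\Delta}\phi=\sum_{\xi\in A_N} a_\xi e^{i(x\cdot\xi-t|\xi|^2)}$ with $A_N=\supp\psi_N\cap\Z^3$ and $a_\xi=\widehat{\phi}(\xi)$. As warm-ups I would record the trivial $L^\infty$ bound $\|u\|_{L^\infty}\ls N^{3/2}\|\phi\|_{L^2}$ (from $|A_N|\ls N^3$ and Cauchy-Schwarz) and an $L^4$ bound with logarithmic loss. For the latter, use $\|u\|_{L^4}^4=\|u^2\|_{L^2}^2$ and Plancherel in $(t,x)$ to get
$$\|u\|_{L^4}^4=\sum_{(\eta,s)\in\Z^3\times\Z}\Big|\sum_{\substack{\xi_1+\xi_2=\eta\\ |\xi_1|^2+|\xi_2|^2=s}} a_{\xi_1}a_{\xi_2}\Big|^2;$$
the substitution $\xi_j=\eta/2\pm w$ forces $w$ onto a two-sphere of radius $\ls N$ in a half-integer lattice, so the classical three-dimensional representation bound gives $\ls N^{1+\eps}$ choices of $w$, and Cauchy-Schwarz inside followed by summation over $(\eta,s)$ yields $\|u\|_{L^4}\ls N^{1/4+\eps}\|\phi\|_{L^2}$ for every $\eps>0$.

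Removing this $\eps$-loss in the range $p>4$ is the heart of the proof, and I would follow Bourgain's circle-method approach. Decompose $[0,2\pi]_t$ into major arcs $M_{a/q}:=\{|t-2\pi a/q|\leq 1/(qN)\}$ with $(a,q)=1$ and $q\leq N$, and their complement. On $M_{a/q}$, Gauss-sum asymptotics for the Weyl sum $\sum_{\xi\in A_N}e^{i(x\cdot\xi-t|\xi|^2)}$ yield a pointwise kernel bound of the form $q^{-3/2}$ times a shifted Euclidean Schr\"odinger kernel at rescaled time; outside the major arcs the kernel is negligible. A $TT^\ast$ reduction, Young's inequality in the time variable, and summation of the $(a,q)$ contributions then produce the sharp bound $\|u\|_{L^p}\ls N^{3/2-5/p}\|\phi\|_{L^2}$ for every $p>4$. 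At the threshold $p=4$ the resulting series over rationals $a/q$ diverges logarithmically (which is the structural counterpart of the divisor bound from the previous paragraph), so the strict inequality $p>4$ is necessary.

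For the cube version \eqref{eq:str_linear_cubes}, choose $\xi_0\in\Z^3$ within distance $\ls N$ of the center of $C\in\mathcal{C}_N$ and set $\tilde\phi(x)=e^{-i\xi_0\cdot x}\phi(x)$, so that $\widehat{\tilde\phi}(\eta)=\widehat{\phi}(\eta+\xi_0)$. Using $|\eta+\xi_0|^2=|\eta|^2+2\eta\cdot\xi_0+|\xi_0|^2$ a direct computation produces the Galilean identity
$$P_C e^{it\Delta}\phi(x)=e^{i(x\cdot\xi_0-t|\xi_0|^2)}\,(P_{C-\xi_0}e^{it\Delta}\tilde\phi)(x-2t\xi_0).$$
Since modulation by $e^{i(x\cdot\xi_0-t|\xi_0|^2)}$ and translation $x\mapsto x-2t\xi_0$ preserve $L^p(\T\times\T^3)$, this gives $\|P_C e^{it\Delta}\phi\|_{L^p}=\|P_{C-\xi_0}e^{it\Delta}\tilde\phi\|_{L^p}$, and the $L^2$ norms agree identically. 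The shifted cube $C-\xi_0$ is contained in a ball of radius $\ls N$ centered at the origin, so \eqref{eq:str_linear_cubes} reduces to the ball-restricted analogue of \eqref{eq:str_linear}, whose proof is a verbatim copy of the argument above (only the hypothesis $|\xi|\ls N$ on the frequency support is used).

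The principal obstacle lies in the second step: lattice-point counts on spheres are intrinsically $\eps$-lossy, and one cannot recover the sharp exponent by the cheap interpolation of the $L^4$ estimate with the $L^\infty$ estimate, which only yields $N^{3/2-5/p+\delta}$ for arbitrary $\delta>0$. The Gauss-sum input from the circle-method decomposition is therefore indispensable, and it is precisely this input that forces and delivers the sharp range $p>4$.
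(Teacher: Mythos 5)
Your treatment of \eqref{eq:str_linear_cubes} coincides with the paper's: the paper also deduces it from \eqref{eq:str_linear} via the Galilean identity
\[
P_C e^{it\Delta}\phi(x)=e^{i(x\cdot\xi_0-t|\xi_0|^2)}\,\bigl(P_{C-\xi_0}e^{it\Delta}\tilde\phi\bigr)(x-2t\xi_0),
\]
and your version is correct; choosing an integer $\xi_0$ near the center (so that $\tilde\phi$ is periodic and the translation is measure preserving) and reducing to a cube near the origin, where Bourgain's argument applies verbatim since it only uses $|\xi|\lesssim N$, is exactly the right way to make this rigorous. Note, however, that the paper does not prove \eqref{eq:str_linear} at all: it is quoted from Bourgain \cite{B93a} (formula 3.117), so a citation would have sufficed for that half.

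Since you chose to sketch \eqref{eq:str_linear}, I have to point out that the sketch does not close at the decisive step. The major-arc/Gauss-sum kernel bounds are the correct input, but ``a $TT^\ast$ reduction, Young's inequality in the time variable, and summation of the $(a,q)$ contributions'' does not yield the sharp exponent. With the standard bound $|K(t,x)|\lesssim N^3\bigl(q(1+N^2|t-2\pi a/q|)\bigr)^{-3/2}$ on the arc at $a/q$, Young's inequality requires $\|K\|_{L^{p/2}_t L^\infty_x}$, and one computes
\[
\|K\|_{L^{p/2}_tL^\infty_x}^{p/2}\lesssim \sum_{q\le N} q\, N^{3p/2}q^{-3p/4}\int_0^{1/(qN)}(1+N^2s)^{-3p/4}\,ds \lesssim N^{3p/2-2},
\]
so $\|e^{it\Delta}\|_{L^2\to L^p}^2\lesssim N^{3-4/p}$, whereas the sharp estimate requires $N^{3-10/p}$; the deficit $N^{6/p}$ is a genuine power loss, not an epsilon, and it persists (for $4<p\lesssim 20/3$, which covers the exponents $4<p<5$ actually used later in the paper) even if one uses the full $L^{p/2}_{t,x}$ norm of the kernel, because the sum over $q$ then diverges. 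Bourgain's actual argument is a level-set one: one estimates $|\{|e^{it\Delta}\phi|>\lambda\}|$ by splitting the kernel at a Farey level $Q=Q(\lambda)$, playing the $L^\infty$ bound for the low-$q$ part against $\|F\|_{L^1}^2$ and an $L^2\to L^2$ bound for the remainder, and optimizing in $Q$. It is this coupling of $Q$ to $\lambda$ --- invisible to Young's inequality --- that produces the exponent $\tfrac32-\tfrac5p$ and the restriction $p>4$. Either supply that argument or, as the paper does, cite it.
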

\begin{proof}[Proof/Reference]
  Estimate \eqref{eq:str_linear} follows \cite[formula 3.117]{B93a}.
  Let $C \in \mathcal{C}_N$ with center $\xi_0$.
  We first apply a Galilean transformation (as in \cite[formulas
  5.7-5.8]{B93a}) to shift the center of the cube $C$ to the
  origin and replace $C$ by $C_0 = C - \xi_0$. Denoting $\phi_0(x) =
  e^{-ix\cdot \xi_0} \phi(x)$ we have $\widehat \phi_0(\xi) = \widehat
  \phi(\xi+\xi_0)$ and therefore $\| P_C \phi\|_{L^2(\T^3)} = \| P_{C_0}
  \phi_0\|_{L^2(\T^3)}$.  Furthermore, we can relate the corresponding
  solutions to the Schr\"odinger equations. We have
  \[
  P_{C_0} e^{it \Delta} \phi_0(t,x) = \sum_{\xi \in C_0} e^{i(x\cdot
    \xi-t|\xi|^2)}\widehat{\phi_0}(\xi) = \sum_{\xi \in C} e^{i(x\cdot
    (\xi-\xi_0)-t|\xi-\xi_0|^2)}\widehat{\phi}(\xi),
  \]
  therefore by rewriting the phase
  \[
  x\cdot(\xi-\xi_0)-t|\xi-\xi_0|^2 = (x+2t\xi_0) \cdot \xi - t|\xi|^2 -
  x\cdot \xi_0 - t |\xi_0|^2
  \]
  we obtain
  \[
  P_{C_0} e^{it \Delta} \phi_0(t,x) = e^{-i (x \xi_0 + t |\xi_0|^2)}
  P_{C} e^{it \Delta} \phi(t,x+2t\xi_0)
  \]
  Thus $\| P_C e^{it \Delta}\phi\|_{L^p(\T \times \T^3)} = \| P_{C_0}e^{it \Delta}
  \phi_0\|_{L^p(\T \times \T^3)}$. The bound \eqref{eq:str_linear_cubes} with $C$ replaced by $C_0$ follows from \eqref{eq:str_linear}.
\end{proof}

\begin{coro}
  Let $p>4$. For all $N\geq 1$ and $C \in \mathcal{C}_N$ we have
  \begin{equation}\label{eq:str_emb_cubes}
    \|P_C u\|_{L^p(\T\times \T^3)}
    \ls N^{\frac32-\frac5p} \|P_C u\|_{U^p_{\Delta} L^2}.
  \end{equation}
\end{coro}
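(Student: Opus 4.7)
The plan is to lift the linear Strichartz bound of Proposition~\ref{prop:str} to the $U^p_\Delta L^2$ setting via the standard atomic transference argument. Since $P_C$ commutes with $e^{it\Delta}$ and $P_C^2 = P_C$, it is enough, after applying the argument with $u$ replaced by $P_C u$, to prove that for every $w \in U^p_\Delta L^2$ one has
\[
\|P_C w\|_{L^p(\T\times\T^3)} \lesssim N^{\frac32-\frac5p}\, \|w\|_{U^p_\Delta L^2}.
\]

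First I would write $e^{-it\Delta} w = \sum_j \lambda_j a_j$ as a convergent atomic decomposition in $U^p(\R,L^2)$, so that $w = \sum_j \lambda_j e^{it\Delta} a_j$. By the triangle inequality in $L^p$,
\[
\|P_C w\|_{L^p(\T\times\T^3)} \leq \sum_j |\lambda_j|\, \|P_C e^{it\Delta} a_j\|_{L^p(\T\times\T^3)},
\]
so the problem reduces to a uniform bound $\|P_C e^{it\Delta} a\|_{L^p(\T\times\T^3)} \lesssim N^{3/2-5/p}$ on each atom $a$; taking the infimum over decompositions then converts $\sum_j |\lambda_j|$ into $\|w\|_{U^p_\Delta L^2}$.

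Next I would estimate a single atom. Writing $a(t) = \sum_{k=1}^K \chi_{[t_{k-1},t_k)}(t)\,\phi_{k-1}$ with $\sum_{k} \|\phi_{k-1}\|_{L^2}^p = 1$, the functions $P_C e^{it\Delta} a$ are piecewise free Schr\"odinger solutions, so by disjointness of the intervals,
\[
\|P_C e^{it\Delta} a\|_{L^p(\T\times\T^3)}^p
= \sum_{k=1}^K \int_{[t_{k-1},t_k)\cap[0,2\pi]}\!\!\! \|P_C e^{it\Delta}\phi_{k-1}\|_{L^p(\T^3)}^p\, dt
\leq \sum_{k=1}^K \|P_C e^{it\Delta}\phi_{k-1}\|_{L^p(\T\times\T^3)}^p.
\]
Proposition~\ref{prop:str} applied termwise bounds the right-hand side by $N^{p(3/2-5/p)} \sum_k \|P_C\phi_{k-1}\|_{L^2}^p \leq N^{p(3/2-5/p)}$, giving the required uniform atomic estimate. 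Combining with the atomic decomposition completes the proof.

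There is no real obstacle here; the argument is routine transference. The only thing to notice is that the exponents match exactly: Bourgain's inequality is an $L^2 \to L^p$ estimate and $U^p$-atoms are normalized in $\ell^p$, so raising the Strichartz bound to the $p$-th power and summing over steps fits perfectly with the atomic normalization.
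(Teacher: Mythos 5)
Your argument is correct and is essentially identical to the paper's proof: reduce to a single $U^p$-atom by the atomic decomposition and the triangle inequality, then apply Bourgain's estimate \eqref{eq:str_linear_cubes} on each step interval of the atom and sum the $p$-th powers using the $\ell^p$ normalization of the atom. Nothing further is needed.
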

\begin{proof}
  Due to the atomic structure of $U^p$ it suffices to consider a
  piecewise linear solution $a$, i.e.
  \[
  a(t)=\sum_{k=1}^{K}\chi_{[t_{k-1},t_{k})}(t) e^{it\Delta} \phi_{k-1}
  \text{ with } \sum_{k=0}^{K}\|\phi_{k-1}\|_{L^2}^{p}=1
  \]
  We apply \eqref{eq:str_linear_cubes} on each interval
  $[t_{k-1},t_{k})$, and then sum up with respect to $k$.
\end{proof}

In the remainder of this section we will refine Bourgain's estimate
\eqref{eq:str_linear} in the case $p>4$ by proving stronger
bounds for solutions which are frequency-localized to $3d$ rectangles.

Let $\mathcal{R}_{M}(N)$ be the collection of all sets in $\Z^3$ which
are given as the intersection of a cube of sidelength $2N$ with strips
of width $2M$, i.e. the collection of all sets of the form
\[
(\xi_0+[-N,N]^3)\cap \{ \xi \in \Z^3: |a\cdot \xi -A|\leq M \}
\]
with some $\xi_0 \in \Z^3, a \in \R^3$, $|a|=1$, $A \in \R$.
\begin{prop}\label{prop:str_linear_impr_linfty}
  For all $1\leq M\leq N$ and $R \in \mathcal{R}_M(N)$ we have
  \begin{equation}\label{eq:str_linear_impr_linfty}
    \|P_R  e^{it \Delta}\phi\|_{L^\infty (\T\times \T^3)}
    \ls M^{\frac12}N \|P_R \phi\|_{L^2(\T^3)}.
  \end{equation}
\end{prop}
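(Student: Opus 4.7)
The plan is to reduce the $L^\infty$ bound to a pointwise Cauchy--Schwarz estimate in frequency and then to a lattice-point count for the rectangle $R$. Expanding as a Fourier series,
\[
P_R e^{it\Delta}\phi(t,x) = (2\pi)^{-3/2}\sum_{\xi\in R} e^{i(x\cdot \xi - t|\xi|^2)}\widehat{\phi}(\xi),
\]
so applying Cauchy--Schwarz in $\xi$ and using Plancherel gives the uniform bound
\[
|P_R e^{it\Delta}\phi(t,x)| \le (2\pi)^{-3/2} |R\cap \Z^3|^{1/2}\,\|P_R\phi\|_{L^2(\T^3)}.
\]
Note that the dispersive structure $e^{-it|\xi|^2}$ never enters: the phase is a unimodular factor, so the estimate is really an instance of the trivial $\ell^1$--$\ell^2$ bound.

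It therefore suffices to prove the lattice count $|R\cap \Z^3|\lesssim M N^2$ uniformly over $R\in \mathcal R_M(N)$. By definition, such an $R$ is contained in a shifted cube $\xi_0+[-N,N]^3$ intersected with a slab $\{|a\cdot\xi - A|\le M\}$ for some unit vector $a\in\R^3$ and some $A\in\R$. Choose an index $j\in\{1,2,3\}$ for which $|a_j|$ is maximal; then $|a_j|\ge 1/\sqrt{3}$, and the slab condition reads
\[
\Bigl|\xi_j - a_j^{-1}\bigl(A - \sum_{k\ne j} a_k \xi_k\bigr)\Bigr| \le M/|a_j| \le \sqrt{3}\,M.
\]
For each choice of the remaining two integer coordinates in $[-N,N]$ (there are $O(N^2)$ of these), the number of admissible integer values of $\xi_j$ is $\lesssim M+1 \lesssim M$ since $M\ge 1$. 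This yields $|R\cap \Z^3|\lesssim M N^2$, and combining with the Cauchy--Schwarz bound gives exactly \eqref{eq:str_linear_impr_linfty}.

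There is essentially no serious obstacle in this argument; the only point requiring care is the uniformity of the lattice count over all directions $a$ and offsets $A$, which the coordinate-selection trick above handles in a direction-independent way. The sharpness of $M^{1/2}N$ can be seen by testing on $\phi$ with $\widehat\phi = \chi_R$, for which the left side equals $(2\pi)^{-3/2}|R\cap\Z^3|$ at $t=x=0$, while the right side is $\sim M^{1/2}N \cdot |R\cap\Z^3|^{1/2}$, matching when the slab direction is coordinate aligned and $|R\cap\Z^3|\sim M N^2$.
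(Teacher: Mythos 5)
Your argument is correct and follows essentially the same route as the paper: the dispersive factor is discarded, the $L^\infty$ norm is bounded by Cauchy--Schwarz via $(\#(R\cap\Z^3))^{1/2}\|P_R\phi\|_{L^2}$, and the proof reduces to the lattice count $\#(R\cap\Z^3)\lesssim MN^2$. Your coordinate-selection count is a slightly more explicit version of the paper's covering argument by axis-parallel cubes of side $M$, but the substance is identical.
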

\begin{proof}
For fixed $t$ we have the uniform bound
\[
\|P_R  e^{it \Delta}\phi\|_{L^\infty (\T^3)}\leq \sum_{\xi \in \Z^3\cap R}|\widehat{\phi}(\xi)|
\leq (\#(\Z^3\cap R))^{\frac12}\|P_R \phi\|_{L^2(\T^3)}.
\]
  We cover the rectangle by $\sim N^2/M^2$ cubes of side-length $M$ and
  faces parallel to the coordinate planes. Each such cube contains $\sim M^3$ lattice points.
  Altogether, we arrive at the upper bound
  \[\#(\Z^3\cap R)\ls M N^2,\] which shows the
  claim.
\end{proof}

The estimates \eqref{eq:str_linear_cubes} and
\eqref{eq:str_linear_impr_linfty} and H\"older's inequality imply

\begin{coro}\label{coro:str_linear_impr}
  Let $p>4$ and $0<\delta<\frac12-\frac2p$. For all $1\leq M\leq N$
  and $R \in \mathcal{R}_M(N)$ we have
  \begin{equation}\label{eq:str_linear_impr}
    \|P_R e^{it \Delta}\phi\|_{L^p(\T\times \T^3)}
    \ls N^{\frac32-\frac5p} \left(\frac{M}{N}\right)^{\delta}\|P_R \phi\|_{L^2(\T^3)}.
  \end{equation}
\end{coro}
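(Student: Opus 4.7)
The plan is to obtain the estimate by H\"older interpolation between the $L^q$-bound for $q$ slightly larger than $4$ coming from \eqref{eq:str_linear_cubes} and the $L^\infty$-bound from Proposition \ref{prop:str_linear_impr_linfty}.

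First, I would transfer \eqref{eq:str_linear_cubes} from cubes to rectangles. For any $R\in\mathcal{R}_M(N)$ there is a cube $C$ of side-length $2N$ (hence, up to an unimportant constant, in $\mathcal{C}_{2N}$) with $R\subset C$. Setting $\psi:=P_R\phi$, we have $P_C\psi=\psi$, so applying \eqref{eq:str_linear_cubes} to $\psi$ yields
\[
\|P_R e^{it\Delta}\phi\|_{L^q(\T\times\T^3)}=\|P_C e^{it\Delta}\psi\|_{L^q(\T\times\T^3)}\ls N^{\frac{3}{2}-\frac{5}{q}}\|P_R\phi\|_{L^2(\T^3)}
\]
for every $q>4$.

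Next, I would interpolate this bound with \eqref{eq:str_linear_impr_linfty}. For $4<q<p$, the elementary H\"older inequality $\|f\|_{L^p}\le \|f\|_{L^q}^{q/p}\|f\|_{L^\infty}^{1-q/p}$, applied to $f=P_Re^{it\Delta}\phi$, gives
\[
\|P_R e^{it\Delta}\phi\|_{L^p(\T\times\T^3)}\ls N^{(\frac{3}{2}-\frac{5}{q})\frac{q}{p}}\bigl(M^{\frac12}N\bigr)^{1-\frac{q}{p}}\|P_R\phi\|_{L^2(\T^3)}.
\]

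Finally, I would choose $q$ to match the exponents. The power of $M$ on the right is $\frac{1}{2}(1-\frac{q}{p})$; setting this equal to $\delta$ forces $q=p(1-2\delta)$, and the requirement $q>4$ becomes exactly $\delta<\frac{1}{2}-\frac{2}{p}$, which is the hypothesis. With this choice of $q$ a short computation shows that the power of $N$ simplifies to $\frac{3}{2}-\frac{5}{p}-\delta$, giving the desired bound. There is no substantive obstacle: once Proposition \ref{prop:str} and Proposition \ref{prop:str_linear_impr_linfty} are in hand, the argument is just real interpolation via H\"older and some bookkeeping of exponents.
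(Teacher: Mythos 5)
Your argument is correct and is exactly the paper's intended proof: the corollary is stated as a direct consequence of \eqref{eq:str_linear_cubes}, \eqref{eq:str_linear_impr_linfty}, and H\"older's inequality, which is precisely your interpolation with $q=p(1-2\delta)$. The exponent bookkeeping checks out, and the constraint $q>4$ reproduces the hypothesis $\delta<\frac12-\frac2p$.
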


We conclude the section with our key trilinear estimate:

\begin{prop}\label{prop:l2_est_linear}
  There exists $\delta>0$ such that for any $N_1\geq N_2 \geq N_3\geq
  1$ and any interval $I\subset [0,2\pi]$ we have
  \begin{equation}\label{eq:l2_est_linear}
    \|\prod_{j=1}^3P_{N_j} u_j\|_{L^2(I\times \T^3)}\ls
    N_2N_3\max\Big\{\frac{N_3}{N_1},\frac{1}{N_2}\Big\}^\delta
    \prod_{j=1}^3\|P_{N_j}u_j\|_{Y^0}.
  \end{equation}
\end{prop}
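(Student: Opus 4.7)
\emph{Reduction via interpolation.} By the embedding $Y^0 \hookrightarrow V^2_\Delta L^2$ (Proposition~\ref{prop:x_in_y}), Lemma~\ref{lem:interpol} together with Remark~\ref{rem:ext}, it suffices to prove two estimates for the trilinear form $T(u_1,u_2,u_3):=\chi_I\cdot\prod_{j=1}^{3}P_{N_j}u_j$ into $L^2(\R\times\T^3)$: first, a rough bound
\[
\|T(u_1,u_2,u_3)\|_{L^2}\ls (N_1N_2N_3)^{2/3}\prod_{j=1}^{3}\|u_j\|_{U^6_\Delta L^2},
\]
which follows from three-fold H\"older and the $L^6$ case of \eqref{eq:str_emb_cubes}; and second, the sharp $U^2_\Delta L^2$-bound with constant $C_2 := N_2N_3\max(N_3/N_1,1/N_2)^\delta$. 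The ratio of the two constants is polynomial in $N_1,N_2,N_3$, so the cubed-logarithm loss supplied by Lemma~\ref{lem:interpol} is absorbed into a slightly smaller $\delta$. The atomic structure of $U^2$ further reduces the sharp estimate to the case of free solutions $u_j = e^{it\Delta}\phi_j$ with $\widehat{\phi_j}$ supported in $\{|\xi|\sim N_j\}$ and with $\|u_j\|_{U^2_\Delta L^2}$ replaced by $\|\phi_j\|_{L^2}$.

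\emph{Free-solution estimate.} If $N_1\sim N_3$, all three scales are comparable, $\max(N_3/N_1,1/N_2)\sim 1$, and the triple $L^6$-H\"older bound already gives the target $N_2N_3$. Assume therefore $N_1\gg N_3$, and set $M:=\max(N_3,\lceil N_1/N_2\rceil)\in[1,N_1]$, so that $M/N_1\sim\max(N_3/N_1,1/N_2)$. Decompose
\[
u_1=\sum_{R}P_Ru_1,\qquad u_2=\sum_{Q_2}P_{Q_2}u_2,\qquad u_3=\sum_{Q_3}P_{Q_3}u_3,
\]
where $R$ runs over an essentially disjoint partition of $\{|\xi|\sim N_1\}$ by rectangles $R\in\mathcal{R}_M(N_1)$ of dimensions $M\times N_1\times N_1$ with thin direction aligned along a fixed coordinate axis, $Q_2$ runs over a cover of $\{|\xi|\sim N_2\}$ by cubes of sidelength $M$, and $Q_3$ runs over a cover of $\{|\xi|\sim N_3\}$ by cubes of sidelength $\sim N_3$. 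For each triple $(R,Q_2,Q_3)$, the output spatial frequency $\xi_1+\xi_2+\xi_3$ of $P_Ru_1\cdot P_{Q_2}u_2\cdot P_{Q_3}u_3$ lies in a set of diameter $\ls M$; combined with the temporal-frequency constraint $\tau=-|\xi_1|^2-|\xi_2|^2-|\xi_3|^2$ (which for free solutions localizes $\tau$ to a prescribed range depending on $(R,Q_2,Q_3)$), this yields an almost orthogonal decomposition with $O(1)$ overlap. On a fixed triple, H\"older's inequality at exponents slightly above $4$, together with Corollary~\ref{coro:str_linear_impr} applied to the rectangle piece (supplying the crucial gain $(M/N_1)^\delta$) and Proposition~\ref{prop:str} applied at scales $M$ and $N_3$ to the cube pieces, controls the trilinear $L^2$-norm. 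A final Cauchy-Schwarz across triples reassembles the $L^2$-norms of $\phi_1,\phi_2,\phi_3$ from the square-sums of the dyadic/rectangular pieces.

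\emph{Main obstacle.} The delicate point is the orthogonality bookkeeping: to exploit the $(M/N_1)^\delta$ gain from Corollary~\ref{coro:str_linear_impr} one must ensure that the number of relevant triples $(R,Q_2,Q_3)$ does not overwhelm it, which forces one to use temporal-frequency orthogonality (not just spatial) across the decomposition. The quantitative input is a Bourgain-style lattice count for the triples $(\xi_1,\xi_2,\xi_3)$ satisfying the simultaneous constraints $\xi_1+\xi_2+\xi_3=\eta$ and $|\xi_1|^2+|\xi_2|^2+|\xi_3|^2=-\tau$. It is precisely here that the finer-than-dyadic structure of the norms $X^s,Y^s$ becomes indispensable: the rectangles $R$ and the temporal-frequency slabs do not fit cleanly into the coarser $V^2_\Delta H^s$ framework, and matching the combinatorial accounting to the refined rectangle Strichartz estimate \eqref{eq:str_linear_impr} without logarithmic losses in the scales is the technical heart of the proof.
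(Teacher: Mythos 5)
Your high-level strategy --- interpolating via Lemma~\ref{lem:interpol} between a rough multilinear Strichartz bound and a sharp $U^2_\Delta L^2$ bound, reducing the latter to free solutions via atoms, and then extracting the gain $\max(N_3/N_1,1/N_2)^\delta$ from Corollary~\ref{coro:str_linear_impr} together with temporal-frequency orthogonality --- is exactly the paper's. But the decomposition you set up does not deliver the orthogonality you claim, and this is not deferred bookkeeping; it is the step that fails. For a rectangle $R$ of dimensions $M\times N_1\times N_1$ with thin direction along a \emph{fixed coordinate axis}, the output frequency of $P_Ru_1\,P_{Q_2}u_2\,P_{Q_3}u_3$ is confined to a set of diameter $\lesssim M$ only in that one coordinate (in the other two it has diameter $\sim N_1$), and $|\xi_1|^2$ varies by $\sim N_1^2$ as $\xi_1$ ranges over $R$, so the temporal frequency $\tau=-\sum_j|\xi_j|^2$ is only localized to an interval of length $\sim N_1^2$ --- its full range --- and gives nothing. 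For a fixed output region there are on the order of $(N_1/M)(N_2/M)^2$ contributing triples $(R,Q_2,Q_3)$, so the claimed ``$O(1)$ overlap'' is false and the final Cauchy--Schwarz does not close (the powers of $N_1,N_2$ in your H\"older step do not match the target either, since the first factor is spread over a full $N_1$-annulus).

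The missing idea is a two-step localization. First restrict $u_1$ to a single cube $C\in\mathcal{C}_{N_2}$ of sidelength $N_2$: this costs nothing because the outputs over a partition of $\Z^3$ into such cubes are spatially almost orthogonal (the Fourier support of $P_{N_2}u_2P_{N_3}u_3$ has diameter $\lesssim N_2$) and $\|u\|_{Y^0}^2=\sum_j\|P_{C_j}u\|_{Y^0}^2$ by the very definition of $Y^0$ --- this is where the finer-than-dyadic structure enters, not in any lattice count. Then slice $C$ into strips $R_k\in\mathcal{R}_M(N_2)$ of width $M=\max\{N_2^2/N_1,1\}$ \emph{orthogonal to the center $\xi_0$ of $C$}, i.e.\ radially, not along a fixed axis: on $R_k$ one has $|\xi_1|^2=M^2k^2+O(M^2k)$ with $M^2k\gtrsim N_2^2$ dominating the $O(N_2^2)$ temporal spread contributed by $u_2,u_3$, so the pieces genuinely are finitely overlapping in $\tau$; Corollary~\ref{coro:str_linear_impr} on $R_k$ then yields $(M/N_2)^{\eps}=\max\{N_2/N_1,1/N_2\}^{\eps}$, which combined with the spare factor of $(N_3/N_2)^{\frac12-}$ left over from the H\"older numerology produces the stated $\max\{N_3/N_1,1/N_2\}^{\delta}$. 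The cube restriction also repairs a second defect of your reduction: your rough endpoint $(N_1N_2N_3)^{2/3}$ grows with $N_1$, so the $(\ln(C/C_2)+1)^3\sim(\ln N_1)^3$ loss from Lemma~\ref{lem:interpol} cannot be absorbed when $N_2$ stays bounded while $N_1\to\infty$ (the available room $\min\{N_1/N_3,N_2\}^{\delta-\delta'}$ is then $O(1)$); after restricting to $C$ the rough bound becomes $N_2N_3(N_3/N_2)^{\delta'}$, independent of $N_1$, and the logarithm is harmless.
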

\begin{proof}
  We begin the proof with some simplifications.  First of all, it is
  enough to consider the case $I=[0,2\pi]$. Second, given a partition $\Z^3
  = \cup C_j$ into cubes $C_j \in \mathcal C_{N_2}$, the outputs
  $P_{N_1} P_{C_j} u_1 P_{N_2} u_2 P_{N_3} u_3$ are almost orthogonal because the
  spatial Fourier-support of $u_2 P_{N_3} u_3$ is contained in at most finitely many cubes of
  sidelength $N_2$. Additionally, we have
  \[
  \| u\|_{Y^0}^2 = \sum_{j} \|P_{C_j} u\|_{Y^0}^2,
  \] which implies that it suffices to prove \eqref{eq:l2_est_linear} in the case when the
  first factor is further restricted to a cube $C \in
  \mathcal{C}_{N_2}$,
  \begin{equation}\label{eq:l2_est_lineara}
    \|P_{N_1} P_C u_1 P_{N_2} u_2 P_{N_3} u_3\|_{L^2}\ls
    N_2N_3 \left(\frac{N_3}{N_1}+\frac{1}{N_2}\right)^\delta
    \prod_{j=1}^3\|P_{N_j}u_j\|_{Y^0},
  \end{equation}
  where $L^2=L^2([0,2\pi]\times\T^3)$.

  In \eqref{eq:l2_est_lineara}, thanks to Proposition
  \ref{prop:x_in_y}, we are allowed to replace $Y^0$ by $V^2_\Delta
  L^2$. Then \eqref{eq:l2_est_lineara} follows by interpolation, via
  our interpolation Lemma~\ref{lem:interpol}, from the following two
  trilinear estimates:
  \begin{equation}\label{eq:l2_est_cubes}
    \begin{split}
      \|P_C P_{N_1} u_1 P_{N_2}u_2 P_{N_3}u_3\|_{L^2} \ls{} N_2N_3
      \left(\frac{N_3}{N_2}\right)^{\delta'}
      \prod_{j=1}^3\|P_{N_j}u_j\|_{U^{4}_\Delta L^2}
    \end{split}
  \end{equation}
  for $0<\delta'<\frac12$, respectively
  \begin{equation}\label{eq:l2_est_u2}
    \|P_C P_{N_1} u_1  P_{N_2}u_2 P_{N_3}u_3\|_{L^2}
    \ls   N_2N_3\left(\frac{N_3}{N_1}+\frac{1}{N_2}\right)^{\delta''}
    \prod_{j=1}^3\|P_{N_j}u_j\|_{U^2_\Delta L^2},
  \end{equation}
for some $\delta''>0$.

  The first bound \eqref{eq:l2_est_cubes} follows from H\"older's
  inequality with $4<p<5$ and $q$ such that $2/p+1/q=1/2$ and
  \eqref{eq:str_emb_cubes}
  \begin{align*}
    &\|P_C P_{N_1} u_1 P_{N_2}u_2 P_{N_3}u_3\|_{L^2}
    \leq\|P_C P_{N_1} u_1 \|_{L^p} \|P_{N_2}u_2\|_{L^p}\| P_{N_3}u_3\|_{L^q}\\
    \ls{}&N_2^{3-\frac{10}p}N_3^{\frac32-\frac5q}\|P_{N_1}u_1\|_{U^{p}_\Delta
      L^2}\|P_{N_2}u_2\|_{U^{p}_\Delta
      L^2}\|P_{N_3}u_3\|_{U^{q}_\Delta L^2},
  \end{align*}
  and the embedding $U^{4}_\Delta L^2 \hookrightarrow U^{p}_\Delta
  L^2\hookrightarrow U^{q}_\Delta L^2$.

  For the second bound \eqref{eq:l2_est_u2} we can use the atomic
  structure of the $U^2$ spaces (see e.g. \cite[Proposition 2.19]{HHK09} for more details on this point) to reduce the problem to the similar
  estimate for the product of three solutions to the linear
  Schr\"odinger equation $u_j=e^{it\Delta}\phi_j$:
  \begin{equation}\label{eq:l2_est_linear_b}
    \|P_C P_{N_1} u_1  P_{N_2}u_2 P_{N_3}u_3\|_{L^2}  \ls
    N_2N_3\left(\frac{N_3}{N_1}+\frac{1}{N_2}\right)^{\delta''}\prod_{j=1}^3\|P_{N_j}\phi_j\|_{L^2}.
  \end{equation}

  Let $\xi_0$ be the center of $C$.  We partition $C = \cup R_k$ into
  almost disjoint strips of width $M=\max\{N_2^2/N_1,1\}$ which are
  orthogonal to $\xi_0$,
  \[
  R_k=\Big\{\xi\in C :\xi\cdot \xi_0 \in [|\xi_0| M k, |\xi_0| M(k+1))
  \Big\}, \qquad |k| \approx N_1/M
  \]
  Each $R_k$ is the intersection of a cube of sidelegth $2N_2$ with a strip of width $M$, and we decompose
  \[
  P_C P_{N_1} u_1 P_{N_2}u_2 P_{N_3}u_3 = \sum_k P_{R_k} P_{N_1} u_1
  P_{N_2}u_2 P_{N_3}u_3
  \]
  and claim that the summands are almost orthogonal in $L^2(\T \times
  \T^3)$. This orthogonality no longer comes from the spatial
  frequencies, but from the time frequency.  Indeed, for $\xi_1 \in
  R_k$ we have
  \[
  \xi_1^2 = \frac1{|\xi_0|^{2}}|\xi_1 \cdot \xi_0|^2 + |\xi_1-\xi_0|^2
  - \frac1{|\xi_0|^{2}}|(\xi_1-\xi_0) \cdot \xi_0|^2 = M^2 k^2 + O(M^2
  k)
  \]
  since $N_2^2 \lesssim M^2 k$. The second and third factor alter the
  time frequency by at most $O(N_2^2)$. Hence the expressions $P_{R_k}
  P_{N_1} u_1 P_{N_2}u_2 P_{N_3}u_3$ are localized at time frequency
  $M^2 k^2 + O(M^2 k)$ and thus are almost orthogonal,
  \[
  \| P_C P_{N_1} u_1 P_{N_2}u_2 P_{N_3}u_3\|_{L^2}^2
  \lesssim \sum_k \|P_{R_k} P_{N_1} u_1 P_{N_2}u_2
  P_{N_3}u_3\|_{L^2}^2
  \]
  On the other hand the estimates \eqref{eq:str_linear_impr} and
  \eqref{eq:str_linear_cubes} yield
  \begin{align*}
    \|P_{R_k} P_{N_1} u_1 P_{N_2}u_2 P_{N_3}u_3\|_{L^2} \ls &
    N_2^{3-\frac{10}p} \left(\frac{M}{N_2}\right)^\eps N_3^{\frac32-\frac5q} \\
    &\|P_{R_k}
    P_{N_1}\phi_1\|_{L^2}\|P_{N_2}\phi_2\|_{L^2}\|P_{N_3}\phi_3\|_{L^2},
  \end{align*}
with $4<p<5$ and $q$ such that $2/p+1/q=1/2$, and $0<\eps<1/2-2/p$.
  Then \eqref{eq:l2_est_linear_b} follows by summing up the squares
  with respect to $k$.
\end{proof}

\section{Proof of the main results}\label{sect:pr}
Before we explain the proofs of Theorems \ref{thm:main-local} and
\ref{thm:main-global} we summarize the results of the previous section
in the following nonlinear estimate.
\begin{prop}\label{prop:multilinear_est}
  Let $s\geq 1$ be fixed. Then, for all $0<T\leq 2\pi$, and $u_k\in
  X^s([0,T))$, $k=1,\ldots,5$, the estimate
  \begin{equation}\label{eq:multilinear_est}
    \Big\|\mathcal{I}(\prod_{k=1}^5
    \widetilde{u}_k)\Big\|_{X^s([0,T))}
    \ls \sum_{j=1}^5\|u_j\|_{X^s([0,T))}\prod_{\genfrac{}{}{0pt}{}{k=1}{k\not=j}}^5\|u_k\|_{X^1([0,T))},
  \end{equation}
  holds true, where $\widetilde{u}_k$ denotes either $u_k$ or
  $\overline{u}_k$.
\end{prop}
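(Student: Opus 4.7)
The plan is to combine duality with a dyadic decomposition and a Cauchy--Schwarz split that reduces everything to the trilinear Strichartz estimate of Proposition \ref{prop:l2_est_linear}. By Proposition \ref{prop:inhom_est}, the first step is to reduce matters to showing
\[
\Bigl|\int_0^T\!\!\int_{\T^3} \bar v \prod_{k=1}^5 \widetilde{u}_k\,dx\,dt\Bigr| \ls \|v\|_{Y^{-s}} \sum_{j=1}^5 \|u_j\|_{X^s} \prod_{\substack{k=1\\k\ne j}}^5 \|u_k\|_{X^1}
\]
for every $v \in Y^{-s}([0,T))$. Decomposing $v = \sum_{N_0} P_{N_0} v$ and $u_k = \sum_{N_k} P_{N_k} u_k$ dyadically, the symmetry of the right-hand side lets me restrict WLOG to $N_1 \ge N_2 \ge \cdots \ge N_5$ and place the $X^s$ factor on $u_1$; spatial frequency support then forces $N_0 \ls N_1$ on every nonzero contribution.

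For each dyadic tuple $(N_0,\ldots,N_5)$ I would apply Cauchy--Schwarz with the $3+3$ splitting
\[
\{P_{N_1} \widetilde{u}_1, P_{N_3} \widetilde{u}_3, P_{N_4} \widetilde{u}_4\} \quad \text{against} \quad \{P_{N_0} v, P_{N_2} \widetilde{u}_2, P_{N_5} \widetilde{u}_5\},
\]
then bound each $L^2$ factor by Proposition \ref{prop:l2_est_linear} via the embedding $X^s \hookrightarrow Y^s$ from Proposition \ref{prop:x_in_y}. The first triple is already in decreasing frequency order and yields $\ls N_3 N_4\, G_1 \prod_{k\in\{1,3,4\}} \|P_{N_k} u_k\|_{Y^0}$ with $G_1 = \max(N_4/N_1, 1/N_3)^\delta$. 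For the second triple, a short sub-case analysis on the position of $N_0$ among $N_2, N_5$ yields the uniform bound $\ls \min(N_0,N_2)\, N_5\, G_2\, \|P_{N_0} v\|_{Y^0} \|P_{N_2} u_2\|_{Y^0} \|P_{N_5} u_5\|_{Y^0}$ with an analogous $\delta$-gain $G_2$; the complex conjugations are irrelevant for these $L^2$-based estimates.

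Expanding the target right-hand side in the same dyadic form and dividing, the pointwise coefficient reduces to $G_1 G_2 \cdot N_0^s \min(N_0,N_2)/(N_1^s N_2)$, which is bounded by $G_1 G_2 \cdot (N_0/N_1)^s$ in both sub-cases $N_0 \ge N_2$ and $N_0 < N_2$ (using $N_0 \le N_1$, $N_2 \le N_1$, and $s \ge 1$). Combined with the $\delta$-gains $G_1$ and $G_2$ this yields genuine polynomial decay in the frequency ratios, so the $\ell^2$-summation over the six dyadic indices closes by a standard Schur-type argument. The step I expect to be most delicate is the choice of Cauchy--Schwarz pairing: it must simultaneously couple $u_1$ with two intermediate factors so the trilinear estimate supplies the $N_3 N_4$ prefactor, pair $v$ with lower-frequency factors so that a $\delta$-gain survives in every relative ordering of $N_0, N_2, N_5$, and leave enough slack in $N_0/N_1 \le 1$ for summing over $N_0 \le N_1$; the passage from $s=1$ to $s \ge 1$ is routine thanks to $N_0 \le N_1$, and the restriction to $[0,T)$ is absorbed into the standard time-localization properties of the $U^2$ and $V^2$ spaces.
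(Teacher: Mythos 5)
Your proposal is correct and follows essentially the same route as the paper: duality via Proposition \ref{prop:inhom_est}, dyadic decomposition with the highest-frequency factor receiving the $X^s$ norm, a $3+3$ Cauchy--Schwarz splitting, the trilinear estimate of Proposition \ref{prop:l2_est_linear} on each half, and a Schur-type dyadic summation. The only cosmetic differences are your choice of pairing ($\{1,3,4\}$ against $\{0,2,5\}$ rather than the paper's $\{1,3,5\}$ against $\{0,2,4\}$) and your use of only $N_0\lesssim N_1$ in place of the paper's explicit case split on $\max\{N_0,N_2\}\approx N_1$; your uniform kernel bound $(N_0/N_1)^s G_1 G_2$ still carries enough decay for the summation to close.
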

\begin{proof} Let $I$ denote the interval $I=[0,T)$, and let $N\geq 1$ be given.
  Proposition \ref{prop:inhom_est} implies $\mathcal{I}(P_{\leq
    N}\prod_{k=1}^5 \widetilde{u}_k)\in X^s(I)$ and
  \[
  \Big\|\mathcal{I}(P_{\leq N}\prod_{k=1}^5
  \widetilde{u}_k)\Big\|_{X^s(I)}\leq \sup_{\genfrac{}{}{0pt}{}{v \in
      Y^{-s}(I):}{\|v\|_{Y^{-s}}=1}} \int_0^{2\pi} \int_{\T^3}P_{\leq
    N}\prod_{k=1}^5 \widetilde{u}_k \, \overline{v} dx dt.
  \]
  We denote $u_0 = P_{\leq N} v$.  Then we need to prove the
  multilinear estimate
  \begin{equation}\label{6lin_r}
    \left| \int_{I \times \T^3} \prod_{k=0}^5 \tilde u_k \ dx dt\right|
    \lesssim  \| u_0\|_{Y^{-s}(I)}  \sum_{j=1}^5\left(\|u_j\|_{X^s(I)}\prod_{\genfrac{}{}{0pt}{}{k=1}{k\not=j}}^5\|u_k\|_{X^1(I)}\right)
  \end{equation}
Once \eqref{6lin_r} is obtained the claimed estimate follows by letting
$N \to\infty$. We consider extensions to
$\R$ of $u_k$ which we will also denote with $u_k$ in the sequel
of the proof,
$k=0,\ldots,5$, and \eqref{6lin_r} reduces to
 \begin{equation}\label{6lin}
    \left| \int_{I \times \T^3} \prod_{k=0}^5 \tilde u_k \ dx dt\right|
    \lesssim  \| u_0\|_{Y^{-s}}  \sum_{j=1}^5\left(\|u_j\|_{X^s}\prod_{\genfrac{}{}{0pt}{}{k=1}{k\not=j}}^5\|u_k\|_{X^1}\right).
  \end{equation}
  We dyadically decompose
  \[
  \widetilde{u}_k=\sum_{N_k\geq 1} P_{N_k} \widetilde{u}_k.
  \]
  In order for the integral in \eqref{6lin} to be nontrivial, the two
  highest frequencies must be comparable. Then, by the Cauchy-Schwarz
  inequality and symmetry it suffices to show that (with
  $L^2=L^2(I\times \T^3)$)
  \begin{equation}\label{6lina}
    \begin{split}
      S&= \sum_{ \mathcal
        N}\|P_{N_1}\widetilde{u}_1P_{N_3}\widetilde{u}_3P_{N_5}\widetilde{u}_5\|_{L^2}\|P_{N_0}\widetilde{u}_0P_{N_2}\widetilde{u}_2P_{N_4}\widetilde{u}_4\|_{L^2}
      \\& \lesssim \| u_0\|_{Y^{-s}}
      \sum_{j=1}^5\|u_j\|_{X^s}\prod_{\genfrac{}{}{0pt}{}{k=1}{k\not=j}}^5\|u_k\|_{X^1},
    \end{split}
  \end{equation}
  where $\mathcal N$ is as the set of all $6$-tuples
  $(N_0,N_1,\ldots,N_5)$ of dyadic numbers $N_i\geq 1$ satisfying
  \[
  N_5\leq \ldots\leq N_1, \quad \max\{N_0,N_2\} \approx N_1.
  \]
  We subdivide the sum into two parts $S = S_1+S_2$:

  {\bf Part $\mathcal N_1$:} $N_2\leq N_0 \approx N_1$: Proposition
  \ref{prop:l2_est_linear} implies
  \[
  S_1 \lesssim \sum_{\mathcal N_1} N_2N_3N_4N_5 \left(\frac{N_5}{N_1}+
    \frac{1}{N_3}\right )^\delta \left(\frac{N_4}{N_0}+
    \frac{1}{N_2}\right )^\delta \prod_{k=0}^5
  \|P_{N_k}u_k\|_{Y^0}.
  \]
  Using Cauchy-Schwarz, we easily sum up with respect to $N_2$, $N_3$, $N_4$ and $N_5$, and obtain
  \[
  S_1 \lesssim \sum_{N_0 \approx N_1} \| P_{N_0}u_0\|_{Y^0} \|
  P_{N_1}u_1\|_{Y^0} \prod_{k=2}^5 \|u_k\|_{Y^1}.
  \]
  Another application of Cauchy-Schwarz with respect to $N_1$ yields
  \[
  S_1 \lesssim \| u_0\|_{Y^{-s}} \| u_1\|_{Y^{s}} \prod_{k=2}^5
  \|u_k\|_{Y^1},
  \]
  as needed.

  {\bf Part $\mathcal N_2$:} $N_0 \leq N_2 \approx N_1$.  Proposition
  \ref{prop:l2_est_linear} implies
  \[
  S_2 \lesssim \sum_{\mathcal N_2} N_0 N_3 N_4 N_5
  \left(\frac{N_5}{N_1}+ \frac{1}{N_3}\right )^\delta \prod_{k=0}^5
  \|P_{N_k}u_k\|_{Y^0}.
  \]
  By Cauchy-Schwarz we sum with respect to $N_3$, $N_4$ and $N_5$ and obtain
  \[
  S_2 \lesssim \sum_{N_0 \leq N_1 \approx N_2} N_0 \|
  P_{N_0}u_0\|_{Y^0} \| P_{N_1}u_1\|_{Y^0} \|
  P_{N_2}u_2\|_{Y^0} \prod_{k=3}^5 \|u_k\|_{Y^1}.
  \]
  For the remaining sum we apply directly Cauchy-Schwarz with respect
  to $N_0$ to obtain
  \[
  S_2 \lesssim \sum_{N_1\approx N_2} N_1^{s+1} \|u_0\|_{Y^{-s}} \|
  P_{N_1}u_1\|_{Y^0} \| P_{N_2}u_2\|_{Y^0} \prod_{k=3}^5
  \|u_k\|_{Y^1}.
  \]
  Finally, we apply Cauchy-Schwarz with respect to $N_1$ to obtain
  \[
  S_2 \lesssim \| u_0\|_{Y^{-s}} \| u_1\|_{Y^{s}} \prod_{k=2}^5
  \|u_k\|_{Y^1}.
  \]
  The proof of the proposition is complete.
\end{proof}

\begin{proof}[Proof of Theorem \ref{thm:main-local}] We split the
  proof into several parts. The general strategy of the proof is
  well-known, see e.g. \cite{CW90,T06}. We study the case $s=1$ in
  detail.

  {\em Part 1: Small data.}  From \eqref{eq:multilinear_est} we obtain
  \begin{equation}\label{eq:nonl_est}
    \begin{split}
      &\Big\|\mathcal{I}(|u|^4u-|v|^4v)\Big\|_{X^1([0,T))}
      \\
      \leq{} & c
      (\|u\|^4_{X^1([0,T))}+\|v\|^4_{X^1([0,T))})\|u-v\|_{X^1([0,T))},
    \end{split}
  \end{equation}
  for all $0<T\leq 2\pi$ and $u,v\in X^1([0,T))$.

  For parameters $\eps>0,\delta>0$, consider the sets
  \begin{align*}
    B_{\eps}:=&\{\phi \in H^1(\T^3): \|\phi\|_{H^1}\leq \eps\},\\
    D_{\delta}:=&\{u \in X^1([0,2\pi))\cap C([0,2\pi);H^1(\T^3)):
    \|u\|_{X^1([0,2\pi))}\leq \delta\}.
  \end{align*}
  The set $D_{\delta}$ is closed in $X^1([0,2\pi))$, hence it is a
  complete space.  With $L(\phi)=e^{it\Delta} \phi$ and
  $NL(u)=-i\mathcal{I}(|u|^4u)$ we aim to solve the equation
  \[
  u=L(\phi)+NL(u)
  \]
  by the contraction mapping principle in $D_{\delta}$, for $\phi\in
  B_{\eps}$. We have
  \[
  \|L(\phi)+NL(u)\|_{X^1([0,2\pi))}\leq \eps +c\delta^5\leq \delta,
  \]
  by choosing
  \begin{equation}\label{eq:choice_eps}
    \delta=(4c)^{-\frac14} \text{ and } \eps=\delta/2.
  \end{equation}
  With the same choice it follows that
  \[
  \|NL(u)-NL(v)\|_{X^1([0,2\pi))}\leq \frac12 \|u-v\|_{X^1([0,2\pi))},
  \]
  which shows that the nonlinear map $L(\phi)+NL(u)$ has a unique fixed
  point in $D_{\delta}$ (Concerning uniqueness in the full space, see
  below). Similarly, for $\phi,\psi \in B_{\eps}$ and the
  corresponding fixed points $u,v \in D_{\delta}$ it follows
  \[
  \|u-v\|_{X^1([0,T))}\leq
  \|\phi-\psi\|_{H^1}+2c\delta^4\|u-v\|_{X^1([0,T))},
  \]
  which yields the Lipschitz dependence.

  {\em Part 2: Large data.}  Let $r>0$ and $N\geq 1$ be given. For
  parameters $\eps,\delta,R,T$, with $0<\eps\leq r$, $0<\delta\leq R$,
  consider the sets
  \begin{align*}
    B_{\eps,r}:=&\{\phi \in H^1(\T^3): \|\phi_{>N}\|_{H^1}\leq \eps, \, \|\phi\|_{H^1}\leq r\},\\
    D_{\delta,R,T}:=&\{u \in X^1([0,T))\cap C([0,T);H^1(\T^3)):\\
    &\qquad \qquad \|u_{>N}\|_{X^1([0,T))}\leq \delta, \,
    \|u\|_{X^1([0,T))}\leq R\},
  \end{align*}
  where $f_{>N}=(I-P_{\leq N})f$.  For $\phi \in B_{\eps,r}$ we have
  \begin{equation}\label{eq:self1}
    \|[L(\phi)+NL(u)]_{>N}\|_{X^1([0,T))}\leq \eps+\|[NL(u)]_{>N}\|_{X^1([0,T))}.
  \end{equation}
  We split $NL(u)=NL_1(u_{\leq N},u_{>N})+NL_2(u_{\leq N},u_{>N})$, such
  that $NL_1$ is at least quadratic in $u_{>N}$ and $NL_2$ is at least
  quartic in $u_{\leq N}$. In view of \eqref{eq:multilinear_est} we obtain
  \begin{equation}\label{eq:self2}
    \|NL_1(u_{\leq  N},u_{>N})\|_{X^1([0,T))}\leq c \delta^2 R^3.
  \end{equation}
  Concerning the estimate for $NL_2$ we recall (see Remark \ref{rem:l1bound}) that it suffices to control the nonlinearity in $L^1([0,T);H^1)$. The latter follows from a Sobolev embedding type argument
  \begin{equation}\label{eq:self3}
    \begin{split}
     & \|NL_2(u_{\leq  N},u_{>N})\|_{X^1([0,T))}
\leq   c_1\|u\|_{L^\infty([0,T); H^1)}\|u_{\leq
        N}\|^4_{L^4([0,T);L^\infty)}\\
&+c_1 N\|u\|_{L^\infty([0,T); L^6)}\|u_{\leq N}\|^4_{L^4([0,T);L^{12})}
\leq   c_2 N^{2}T R^5.
    \end{split}
  \end{equation}
  Similarly to \eqref{eq:self2}, one can show that
  \[
  \|NL_1(u_{\leq N},u_{>N})-NL_1(v_{\leq N},v_{>N})\|_{X^1([0,T))}\leq
  c_3 \delta R^3 \|u-v\|_{X^1([0,T))}.
  \]
  Moreover, as in \eqref{eq:self3}, it follows
  \[
  \|NL_2(u_{\leq N},u_{>N})-NL_2(v_{\leq N},v_{>N})\|_{X^1([0,T))}\leq
  c_4 N^{2}R^4 T \|u-v\|_{X^1([0,T))}.
  \]
  Let $C\geq 1$ be the maximum of $c,c_1,\ldots,c_4$. By choosing the
  parameters
  \begin{equation}\label{eq:para}
    R:=4r, \; \delta:=(8CR^3)^{-1}, \; \eps:=\delta/2, \; T:=\delta(8CR^5N^{2})^{-1},
  \end{equation}
  we have found that for any $\phi \in B_{\eps,r}$ the map
  \[
  L(\phi)+NL(u):D_{\delta,R,T}\to D_{\delta,R,T}
  \]
  is a strict contraction, which has a unique fixed point $u$ and
  $\phi\mapsto u$ is Lipschitz continuous with constant $2$.

  {\em Part 3: Conclusion.} Concerning uniqueness, we note that by
  translation invariance in $t$ it suffices to consider
  \[u,v \in X^1([0,T))\cap C([0,T);H^1(\T^3)) \text{ such that }
  u(0)=v(0),
  \]
  and show $u=v$ for arbitrarily small $T>0$. Indeed, this follows
  from the uniqueness of the fixed point in balls of arbitrary radius
  already proved in Part 2.

  Now, let $\phi_\ast \in H^1(\T^3)$ and $\eps>0$ be as in
  \eqref{eq:para} for given $r=2\|\phi_\ast\|_{H^1}$ (notice that this
  choice is independent of $N$). Choose $N\geq 1$ large enough such
  that $\|[\phi_\ast]_{>N}\|_{H^1}\leq \eps/2$. Then, for all $\phi
  \in B_{\eps/2}(\phi_\ast)$ we have $\phi \in B_{\eps,r}$, and by
  Part 2 we find $T=T(r,N)>0$ and a unique solution $u \in
  X^1([0,T))\cap C([0,T);H^1(\T^3))$ which depends Lipschitz
  continuously on the initial data $\phi$.

  Finally, we remark that by the tame estimate \eqref{eq:multilinear_est} we can easily
  prove persistence of higher order Sobolev regularity, with a
  (conceivably shorter) lifespan depending on
  $\|\phi\|_{H^s}$. However, an a posteriori iteration argument shows
  that the time of existence must be at least $T(\phi_\ast)$.
\end{proof}

\begin{proof}[Proof of Theorem \ref{thm:main-global}]
  By part 1 and 3 of the previous proof it suffices to provide a
  suitable a priori bound on the solution in $H^1$.

  In the \emph{defocusing case}, this obviously follows from
  conservation laws \eqref{eq:energy-cons} and \eqref{eq:l2-cons} the
  Sobolev embedding $H^1(\T^3)\hookrightarrow L^6(\T^3)$:
  \[
  \|u(t)\|^2_{L^2}+\|\nabla u \|_{L^2}^2\leq 2 E(u(0))+2 M(u(0)) \leq
  \|u(0)\|_{H^1}^2+d^2\|u(0)\|_{H^1}^6.
  \]
  If $\|u(0)\|_{H^1}$ is chosen to be small enough, it follows that
  for $\eps$ as in \eqref{eq:choice_eps} the solution satisfies
  $\|u(t)\|_{H^1}\leq \eps$ on any interval of existence, and we can
  iterate the argument from Part 1 indefinitely.

  In the \emph{focusing case}, we combine the conservation laws
  \eqref{eq:energy-cons} and \eqref{eq:l2-cons} with a continuity
  argument, based on the inequality
  \begin{align*}
    \|u(t)\|^2_{H^1}=& 2 E(u(0))+2 M(u(0))+\frac13\|u(t)\|_{L^6}^6\\
    \leq &\|u(0)\|_{H^1}^2+d^2\|u(0)\|_{H^1}^6+d^2\| u(t)\|^6_{H^1},
  \end{align*}
  which is valid for any solution $u \in X^1([0,2\pi))$.

  We consider $f(x)=x-d^2x^3$ for $x\geq 0$, which increases from $0$
  to its maximum value $2(3\sqrt{3}d)^{-1}$ and satisfies $f(x)\geq
  \frac23 x$ on the interval $I=[0,(\sqrt{3} d)^{-1}]$. We have shown
  above that $f(\|u(t)\|^2_{H^1})< \eps_0^2$, for all $t \in [0,2\pi)$
  and all initial data satisfying \[\|u(0)\|_{H^1}^2+d^2\|u(0)\|_{H^1}^6< \eps_0^2.\]
  By choosing $\eps_0^2=\min\{2(3\sqrt{3}d)^{-1},2/3\eps^2\}$, with $\eps$ as in
  \eqref{eq:choice_eps}, the continuity of $t\mapsto \|u(t)\|^2_{H^1}$
  implies that $\|u(t)\|^2_{H^1}\in I$ for all $t\in [0,2\pi)$, and
  therefore $\|u(t)\|^2_{H^1}\leq 3/2\eps_0^2 \leq \eps^2$ for all $t
  \in [0,2\pi)$, which allows us to iterate the small data local
  well-posedness argument.
\end{proof}

\bibliographystyle{amsplain} \label{sect:refs}\bibliography{nls-refs}

\end{document}